\newcommand{\CC}{{\mathbb{C}}}
\newcommand{\FF}{{\mathbb{F}}}
\newcommand{\GG}{{\mathbb{G}}}
\newcommand{\NN}{{\mathbb{N}}}
\newcommand{\ZZ}{{\mathbb{Z}}}
\newcommand{\ba} {\mathbf a}
\newcommand{\bb} {\mathbf b}
\newcommand{\bc} {\mathbf c}
\newcommand{\bd} {\mathbf d}
\newcommand{\bt} {\mathbf t}
\newcommand{\bu} {\mathbf u}
\newcommand{\bw} {\mathbf w}
\newcommand{\bG} {\mathbf G}
\newcommand{\bP} {\mathbf P}
\newcommand{\bQ} {\mathbf Q}
\newcommand{\bR} {\mathbf R}
\newcommand{\bS} {\mathbf S}
\newcommand{\cA} {\mathcal A}
\newcommand{\cC} {\mathcal C}
\newcommand{\cF} {\mathcal F}
\newcommand{\cR} {\mathcal R}
\newcommand{\cW} {\mathcal W}
\newcommand{\cX} {\mathcal X}
\newcommand{\fc} {\mathfrak c}
\newcommand{\fS} {\mathfrak S}
\newcommand{\Aut}{{{\operatorname{Aut}}}}
\newcommand{\Hom}{{{\operatorname{Hom}}}}
\newcommand{\IBr}{{{\operatorname{IBr}}}}
\newcommand{\Inn}{{{\operatorname{Inn}}}}
\newcommand{\Irr}{{{\operatorname{Irr}}}}
\newcommand{\Out}{{{\operatorname{Out}}}}
\newcommand{\Syl}{{{\operatorname{Syl}}}}
\newcommand{\GL}{\operatorname{GL}}
\newcommand{\SL}{\operatorname{SL}}
\newcommand{\PGL}{\operatorname{PGL}}
\newcommand{\CSp}{{{\operatorname{CSp}}}}
\newcommand{\Sp}{{{\operatorname{Sp}}}}
\newcommand{\OO}{\operatorname{O}}
\newcommand{\GO}{\operatorname{GO}}
\newcommand{\SO}{\operatorname{SO}}
\newcommand{\U}{\operatorname{U}}
\let\vhi=\varphi
\let\ga=\gamma
\newtheorem{thm}{Theorem}[section]
\newtheorem{lem}[thm]{Lemma}
\newtheorem{prop}[thm]{Proposition}
\newtheorem{thmA}{Theorem}
\theoremstyle{definition}
\newtheorem{rem}[thm]{Remark}
\newtheorem{exmp}[thm]{Example}
\begin{document}

\title{Weights for compact connected Lie groups}

\author{Radha Kessar}
\address{Department of Mathematics, University of Manchester,
  United Kingdom}
\email{radha.kessar@manchester.ac.uk}

\author{Gunter Malle}
\address{FB Mathematik, TU Kaiserslautern, Postfach 3049,
  67653 Kaisers\-lautern, Germany.}
\email{malle@mathematik.uni-kl.de}

\author{Jason Semeraro}
\address{Heilbronn Institute for Mathematical Research, Department of
  Mathematics, University of Leicester, United Kingdom}
\email{jpgs1@leicester.ac.uk}

\thanks{We thank Bob Oliver  for illuminating  conversations. The second author gratefully acknowledges support by the DFG--- Project-ID 286237555 -- TRR 195. }

\begin{abstract}
Let $\ell$ be a prime. If $\bG$ is a compact connected Lie group, or a connected
reductive algebraic group in characteristic different from $\ell$, and $\ell$ is
a good prime for $\bG$, we show that the number of weights of the $\ell$-fusion
system of $\bG$ is equal to the number of irreducible characters of its Weyl
group. The proof relies on the classification of $\ell$-stubborn subgroups in
compact Lie groups.
\end{abstract}

\keywords{fusion systems, compact Lie groups, Alperin weight conjecture}

\subjclass[2010]{20C20, 55R35, 57T10}

\date{\today}

\maketitle


\section{Introduction}

In this paper we prove the following identity, first conjectured in \cite{S23}:

\begin{thmA}   \label{thm:compweights2}
 Suppose that $\bG$ is a compact connected Lie group. Let $\ell$ be a prime
 which is good for $\bG$, $S$ a Sylow $\ell$-subgroup of $\bG$,
 $\cF= \cF_S(\bG)$ the corresponding saturated fusion system and $W$ the Weyl
 group of $\bG$. Then,
 \[\bw(\cF) = |\Irr(W)|. \]
\end{thmA}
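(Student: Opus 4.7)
The strategy is to compute $\bw(\cF)$ via the standard formula
\[\bw(\cF) = \sum_{[P]} z\bigl(\Out_\cF(P)\bigr),\]
where $[P]$ runs over $\cF$-conjugacy classes of $\cF$-centric $\cF$-radical subgroups of $S$ and $z(H)$ is the number of ordinary irreducible characters of $\ell$-defect zero of $H$. To enumerate the relevant $P$ I would invoke the classification of $\ell$-stubborn subgroups of compact connected Lie groups due to Jackowski--McClure--Oliver. Under the good-prime hypothesis each such $P$ contains the $\ell$-torsion $T_\ell$ of a maximal torus $T\le\bG$, and the assignment $P\mapsto\bar P:=P/T_\ell\le N_\bG(T)/T=W$ should yield a bijection between $\cF$-classes of $\cF$-centric $\cF$-radical $P\le S$ and $W$-classes of the $\ell$-radical subgroups $\bar P\le W$. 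This is the one structural use I would make of the JMO classification.

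The next step is to identify $\Out_\cF(P)=N_\bG(P)/(P\cdot C_\bG(P))$ in terms of $W$-data. Since $\ell$ is good, $C_\bG(T_\ell)^\circ=T$, and one expects $C_\bG(P)=T^{\bar P}$, the $\bar P$-fixed subtorus of $T$. A short calculation should then place $\Out_\cF(P)$ in a short exact sequence
\[1\longrightarrow A_{\bar P}\longrightarrow \Out_\cF(P)\longrightarrow N_W(\bar P)/\bar P\longrightarrow 1,\]
in which the kernel $A_{\bar P}$ is a finite quotient of the torus $T/T^{\bar P}$ and hence of order prime to $\ell$. A Clifford-theoretic analysis through this $\ell'$-kernel then expresses $z(\Out_\cF(P))$ in terms of $z(N_W(\bar P)/\bar P)$ and the characters of $A_{\bar P}$; in the split case this simply reads $z(\Out_\cF(P))=|\Irr(A_{\bar P})|\cdot z(N_W(\bar P)/\bar P)$.

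Assembling these contributions, the desired identity becomes a purely $W$-theoretic assertion of the form
\[\sum_{[\bar P]}|\Irr(A_{\bar P})|\cdot z\bigl(N_W(\bar P)/\bar P\bigr)=|\Irr(W)|.\]
As a sanity check, when $\ell\nmid|W|$ only $\bar P=1$ appears, $A_1=1$, and $z(W)=k(W)$, giving the equality immediately; in general the torus factors $A_{\bar P}$ are precisely what repairs the gap between the Alperin-type sum $\sum z(N_W(\bar P)/\bar P)=l(W)$ and $k(W)=|\Irr(W)|$. The main obstacle I anticipate is the joint verification of (i) the precise structure of the kernel $A_{\bar P}$ and the extension class, which demands a careful use of the good-prime hypothesis to control how the non-torus part of $P$ interacts with $T$, and (ii) the character-counting identity displayed above, which is finer than a direct invocation of Alperin's weight conjecture for $W$ since it mixes radical-subgroup data with torus-character data. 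Both ingredients ultimately rest on the Jackowski--McClure--Oliver description of stubborn subgroups as the combinatorial backbone.
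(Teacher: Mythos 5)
There is a genuine gap: your structural claim that every $\cF$-centric $\cF$-radical subgroup $P$ of $S$ contains the $\ell$-torsion $T_\ell$ of a maximal torus, and hence corresponds to an $\ell$-radical subgroup of $W$, is false. The $\ell$-stubborn subgroups classified by Oliver and Jackowski--McClure--Oliver include groups built from the irreducible projective-representation pieces $\Gamma_{\ell^\ga}^U \wr E_{\ell^{c_1}} \wr \cdots$, whose identity component is a torus of much smaller rank than a maximal torus of $\bG$ and which do not contain $T_\ell$. Concretely, take $\bG=\U(2)$ and $\ell=2$ (a good prime since all primes are good in type $A$). The closure $\Gamma_2^U\le\U(2)$ of Oliver's extraspecial-type group has identity component the $1$-dimensional center of $\U(2)$, not the $2$-dimensional maximal torus, and its intersection with a maximal torus misses most of its $2$-power torsion. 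Moreover there are two $\bG$-classes of $2$-stubborn subgroups of $\U(2)$, yet $W=C_2$ has only one $2$-radical subgroup (namely $C_2$ itself), so the bijection you postulate cannot exist even at the level of counting classes. Since this bijection is the backbone of your argument---the exact sequence for $\Out_\cF(P)$, the torus-kernel $A_{\bar P}$, and the final $W$-theoretic identity all rest on it---the proof does not go through.

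The paper's proof reduces in a different way: after quotienting by $Z(\bG)$ and splitting into simple factors, the classical cases are handled by feeding Oliver's explicit description of $\ell$-stubborn subgroups and their normalizer quotients (wreath products of $\Sp_{2\gamma}(\ell)\times\GL_{c_1}(\ell)\times\cdots$ by symmetric groups) into an Alperin--Fong style defect-zero count, while the few remaining exceptional cases at good primes ($E_6$ at $\ell=5$, $E_7$ at $\ell=5,7$, $E_8$ at $\ell=7$) are verified directly from known fusion-system data. No intermediate passage through $\ell$-radical subgroups of $W$ occurs, precisely because the stubborn subgroups of $\bG$ do not line up with them.
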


Here $\bw(\cF)$ denotes the number of weights of the $\ell$-fusion system
attached to $\bG$ (see Section \ref{sec:prelim} for precise definitions)  and $\Irr(W)$  denotes the  
set  of  ordinary  irreducible characters of $W$. We recall the  definition of good primes  later in this  introduction.  While
the stated identity is rather easy to see for primes not dividing $|W|$, it
seems totally mysterious for the other primes.
\medskip

The origin of our interest in this theorem is Alperin's Weight Conjecture (AWC)
in finite group representation theory which relates the number
of simple modules of a block of a finite group algebra over a field of
characteristic $\ell$ to the number of simple, projective modules of smaller
``local" groups. Indeed, via some deep representation theory,  it can be shown  that    
 for the principal block of the group algebra of a
finite group of Lie type whose defining characteristic is different
from~$\ell$, and under some extra conditions,  AWC is equivalent to an equation
of the above type (see \cite[Prop.~4.1]{KMS}). In \cite[Thm~1]{KMS}, we proved
this equality for finite groups of Lie type (under the relevant conditions)  and made the surprising discovery
that it continues to hold in a broader setting than that of finite groups, in
particular for certain exotic fusion systems on finite $\ell$-groups arising
from homotopy fixed point spaces of connected $\ell$-compact groups. We view
Theorem~\ref{thm:compweights2}, where the underlying $\ell$-group is an infinite
group, as additional evidence for AWC and a further indication that it is not
restricted to the world of finite groups. We also note that
Theorem~\ref{thm:compweights2} yields the analogous identity for $\ell$-weights
of fusion systems of connected reductive algebraic groups in characteristic
different from $\ell$:

\begin{thmA}   \label{thm:compweights3}
 Let $\ell$ and $p$ be distinct primes. Suppose that $\bG$ is a connected
 reductive algebraic group over $\overline{\FF}_p$ such that $\ell$ is good
 for $\bG$, let $S$ be a Sylow $\ell$-subgroup of $\bG$, $\cF=\cF_S(\bG)$
 the corresponding saturated fusion system and $W$ the Weyl group of~$\bG$.
 Then,
 \[\bw(\cF) = |\Irr(W)|. \]
\end{thmA}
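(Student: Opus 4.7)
The strategy is to deduce Theorem~\ref{thm:compweights3} from Theorem~\ref{thm:compweights2} by attaching to $\bG$ a compact connected Lie group $\bG_c$ with Weyl group $W$ and an $\ell$-fusion system equivalent to $\cF_S(\bG)$. Since the right-hand side $|\Irr(W)|$ depends only on $W$, which is invariant under this transition, establishing such an equivalence immediately yields the desired identity.

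The first step is to construct the compact companion $\bG_c$. I would lift the reductive group scheme underlying $\bG$ from $\overline{\FF}_p$ to the Witt vectors $W(\overline{\FF}_p)$, base change to $\CC$ to obtain a complex reductive group with the same root datum, and take a maximal compact subgroup $\bG_c$. Then $\bG_c$ has Weyl group $W$, and goodness of $\ell$---being a purely combinatorial condition on the root datum---is inherited.

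The heart of the proof is to show that the saturated $\ell$-fusion systems $\cF_S(\bG)$ and $\cF_{S_c}(\bG_c)$ agree, after identifying $S$ with $S_c$. Fixing a maximal torus $T\subset\bG$ and noting that $\ell\neq p$ implies $\ell$-elements are semisimple, the $\ell$-torsion of $T(\overline{\FF}_p)$ is $(\mathbb{Q}_\ell/\mathbb{Z}_\ell)^r$ with the standard $W$-action, matching that of a maximal torus of $\bG_c$. This gives a natural isomorphism between the Sylow $\ell$-subgroups, both realized as extensions of this discrete $\ell$-torus by a Sylow $\ell$-subgroup of $W$. For the morphism sets one observes that any two $\ell$-subgroups of a maximal torus which are $\bG(\overline{\FF}_p)$-conjugate are in fact conjugate by an element of $N_\bG(T)$---and analogously inside $\bG_c$---so that toral fusion reduces to Weyl-group combinatorics in both settings. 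The main obstacle is matching fusion on non-toral $\ell$-subgroups, which arise when $\ell \mid |W|$; these are precisely the $\ell$-stubborn subgroups whose classification underlies the proof of Theorem~\ref{thm:compweights2}, and the same classification applies here. Once the fusion equivalence is established, Theorem~\ref{thm:compweights2} applied to $\bG_c$ produces $\bw(\cF_{S_c}(\bG_c)) = |\Irr(W)|$, and this count transports to $\cF_S(\bG)$, completing the proof.
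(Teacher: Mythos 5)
Your overall strategy coincides with the paper's: construct a compact connected Lie group $\bG_c$ with the same Weyl group, prove that the two $\ell$-fusion systems are isomorphic, and then invoke Theorem~\ref{thm:compweights2}. The construction of $\bG_c$ (realize the root datum over $\ZZ$ or the Witt vectors, pass to $\CC$-points, take a maximal compact subgroup) is also the same as in the paper's Proposition~\ref{prop:compactalgebraic}. Where you and the paper diverge is on how to actually establish the fusion-system isomorphism, and this is where your argument has a real gap.

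You attempt a direct, group-theoretic comparison: match the discrete $\ell$-tori and their $W$-actions, observe that both Sylow $\ell$-subgroups are extensions of the discrete torus by a Sylow $\ell$-subgroup of $W$, handle toral fusion via $N_\bG(T)$, and then wave at the non-toral case by appealing to ``the classification of $\ell$-stubborn subgroups.'' This does not close the argument. First, saying both Sylows are extensions of the same torus by the same finite group does not show they are isomorphic; you would need to compare the extension cocycles. Second, and more seriously, the $\ell$-stubborn classification used in the proof of Theorem~\ref{thm:compweights2} lives entirely inside compact Lie groups (Oliver, Jackowski--McClure--Oliver, Viruel); it says nothing a priori about discrete $\ell$-toral subgroups of $\bG(\overline{\FF}_p)$. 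Invoking it here to match non-toral fusion presupposes exactly the equivalence you are trying to prove. And even for toral subgroups, you must control $\Hom_{\bG}(P,Q)$ for arbitrary discrete $\ell$-toral $P,Q$, not merely conjugacy of subgroups of a fixed maximal torus.

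The paper sidesteps all of this by going through homotopy theory: the Friedlander--Mislin theorem \cite{FM84} gives a homotopy equivalence $B\GG(\CC)^{\wedge}_\ell \simeq (B\bG)^{\wedge}_\ell$, the Cartan--Chevalley--Iwasawa--Malcev--Mostow theorem identifies $B\GG(\CC)$ with $B\bG_c$, and Broto--Levi--Oliver's results (\cite[Thms~7.4, 8.10, 9.10]{BLO07}) show that the $\ell$-completed classifying space determines the saturated fusion system. Some substitute for this machinery is genuinely necessary: there is no elementary way to compare conjugation in a compact Lie group with conjugation in an algebraic group over $\overline{\FF}_p$. You should either cite Friedlander--Mislin and the Broto--Levi--Oliver recognition theorems, or replace your sketch by a genuine proof of the fusion isomorphism at the level of morphism sets; as written, the central step is unproved.
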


The proof of Theorem~\ref{thm:compweights2} proceeds by transferring from the
discrete setting to the continuous one in which the role of centric, radical
subgroups in fusion systems is taken over by $\ell$-stubborn subgroups (in the
sense of Jackowski--McClure--Oliver \cite{JMO92-I}) of compact Lie groups. This
leads to a straightforward reduction to the case that the underlying compact
group is simple, at which stage we are able to invoke known classifications of
$\ell$-stubborn subgroups \cite{O94}, \cite{JMO95}, \cite{Vi01}. The proof for
the classical groups involves counting arguments similar to those used for
\cite[Thm~1]{KMS} which originated in the work of Alperin--Fong  \cite{AF90}.
\medskip

Recall that a prime $\ell$ is \emph{bad} for a root system $\Phi$ if $\ZZ\Phi/\ZZ\Psi$
has $\ell$-torsion for some closed subsystem $\Psi\subset\Phi$ (see e.g.
\cite[Def.~B.24]{MT}). Thus, it is bad for $\Phi$ if it is so for one of its
indecomposable summands. The bad primes for indecomposable root systems are:
none for type $A_n$, $\ell=2$ for $B_n,C_n$ and $D_n$, $\ell=2,3$ for
$G_2,F_4,E_6,E_7$, and $\ell=2,3,5$ for $E_8$ (see \cite[Tab.~14.1]{MT}). In
particular, all primes $\ell\ge7$ are good (that is, non bad)
for all types. In Proposition~\ref{prop:badfail} we show that the conclusion of
Theorem~\ref{thm:compweights2} does not hold for the compact symplectic groups
at the bad prime $\ell=2$, and in Proposition~\ref{prop:F4} that it does not
hold for $\bG=F_4$ at the bad prime $\ell=3$.
Still in both cases we obtain $\bw(\cF)\le |\Irr(W)|$, as conjectured in
\cite{S23}. In Example~\ref{exmp:finite} we observe that for finite reductive
groups in bad characteristic we may have $\bw(\cF) > |\Irr(W)|$.
\medskip

Recall that by results of Friedlander (see \cite[Thm~3.1]{BMO} and
Section~\ref{subsec:red}), the $\ell$-completed classifying space (and
therefore the $\ell$-fusion system) of a
finite group of Lie type in non-describing characteristic can be recovered by
taking homotopy fixed points under suitable unstable Adams operations on the
$\ell$-completed classifying space of the corresponding compact Lie group. It
would be desirable to obtain a direct connection between the weight equation
demonstrated in Theorems~\ref{thm:compweights2} and~\ref{thm:compweights3} and
that shown in ~\cite[Thm~1]{KMS}, but as yet we do not see such a connection
either via unstable Adams operations or via Frobenius morphisms on algebraic
groups. In particular, we do not see how to deduce the equation in the finite
setting from the infinite one or vice-versa. Note that our results show that
the number of weights for compact groups does not depend on the isogeny type,
while for example the two isogenous groups $\SL_3(4)$ and $\PGL_3(4)$ have
five respectively three 3-weights. In another direction, we believe
that the purview of Theorem~\ref{thm:compweights2} could be expanded to also
cover fusion systems of $\ell$-compact groups as defined in
\cite[Sec.~10]{BLO07}.
\medskip

The paper is organised as follows. In Section~\ref{sec:prelim}, we recall the
relevant background material. Section~\ref{sec:proof} contains the proofs of
Theorems~\ref{thm:compweights2} and~\ref{thm:compweights3} and in
Section~\ref{sec:badprimes}, we present our calculations for bad primes.

\section{Preliminaries}   \label{sec:prelim}

In this section we recall some aspects of the theory of saturated fusion
systems associated to compact Lie groups as set up by Broto, Levi and Oliver
in \cite{BLO07}.
For subgroups $Q, R$ of a group $G$, $\Hom_G(Q,R)$ consists of the group
homomorphisms from $Q$ to $R$ induced by conjugation by elements of $G$,
$\Aut_G(Q):=\Hom_G(Q, R)\cong N_G(Q)/C_G(Q)$ is the group of $G$-automorphisms
of $Q$, and $\Out_G(Q) = \Aut_G(Q)/\Inn(Q) \cong N_G(Q)/QC_G(Q)$ is the
corresponding group of outer automorphisms. For $H \leq G$ denote by $\cF_H(G)$
the category with objects the subgroups of $H$, in which the $\cF$-morphisms
from $Q$ to $R$ are the elements of $\Hom_G(Q,R)$ for $Q, R \leq H$, and
composition of morphisms is the usual composition of maps. Let $\ell$ be a
prime number.

\subsection{Fusion systems on discrete $\ell$-toral groups.}
A \emph{discrete $\ell$-toral group} is a group $P$ with normal subgroup
$P^\circ$ such that $P^\circ$ is isomorphic to a finite product of copies of
$\ZZ/\ell^\infty$ and $P/P^\circ$ is a finite $\ell$-group. In particular, any
finite $\ell$-group is a discrete $\ell$-toral group. Here $P^\circ$ is
characterised as the subset of infinitely divisible elements of $P$ as well as
the minimal subgroup of finite index of $P$. In particular, $P^\circ$ is
characteristic in $P$. It is called the \emph{identity component of $P$} and
$P$ is \emph{connected} if $P=P^\circ$.
\medskip
 
Let $S$ be a discrete $\ell$-toral group and let $\cF$ be a saturated fusion
system on $S$ as defined in \cite[Sec.~2]{BLO07}. Following \cite{S23}, the
the number of weights of $\cF$ is defined by 
\[ \bw(\cF) := \sum_Q z(\Out_\cF(Q)), \]
where $Q$ runs over a set of representatives of $\cF$-conjugacy classes of
$\cF$-centric, $\cF$-radical subgroups of $S$,
$\Out_\cF(Q) = \Aut_\cF(Q) /\Inn(Q)$ is the group of $\cF$-outer automorphisms
of $Q$, and $z(\Out_\cF(Q))$ is the number of ordinary irreducible characters
of $\Out_\cF(Q)$ characters of zero $\ell$-defect. Note that by
\cite[Cor.~3.5, Defn.~2.2]{BLO07}, $S$ has only finitely many classes of
$\cF$-centric, $\cF$-radical subgroups and moreover, the $\cF$-outer
automorphism group of any $\cF$-centric, $\cF$-radical subgroup of $S$ is
finite. Thus, $\bw(\cF) $ is well defined. Also, note that if $G$ is a finite
group, $S$ is a Sylow $\ell$-subgroup of $G$ and $\cF=\cF_S(G)$ is the
corresponding saturated fusion system, then $\bw(\cF)$ is the number of Alperin
weights associated to the principal block of $kG$, $k$ an algebraically closed
field of characteristic $\ell$ (see \cite[Thm~8.14.4]{Li18-II}).
\medskip

Following \cite[Sec.~8]{BLO07}, we say a group $G$ ``has Sylow
$\ell$-subgroups" if there is a discrete $\ell$-toral subgroup $S\leq G$ which
contains all discrete $\ell$-toral subgroups of $G$ up to conjugacy.
Such a subgroup, if it exists, is called a \emph{Sylow $\ell$-subgroup of $G$}.
Note that the set of Sylow $\ell$-subgroups of $G$ is a single
$G$-conjugacy class. An \emph{$\ell$-centric subgroup} of $G$ is a discrete
$\ell$-toral subgroup $ P \leq G$ such that $C_G(P)$ has a Sylow
$\ell$-subgroup, which is $Z(P)$ (and unique). Equivalently, a discrete
$\ell$-toral $P \leq G $ is $\ell$-centric in $G$ if $C_G(P)/Z(P)$ contains no
elements of order $\ell$.

\subsection{Fusion systems of compact Lie groups.}
Let $\bG$ be a compact Lie group. The following definitions and results are
taken from \cite[Sec.~9]{BLO07}.

\begin{itemize}
\item An \emph{$\ell$-toral group} is a compact Lie group whose identity
component is a torus and whose group of components is an $\ell$-group. If $P$
is a discrete $\ell$-toral subgroup of $\bG$, then the closure $\bar P$ of $P$
in $\bG$ is an $\ell$-toral group.
\item For any $\ell$-toral group $\bP$, $\Syl_\ell(\bP)$ denotes the set of all
discrete $\ell$-toral subgroups $P$ of $\bP$ such that $\bP^\circ.P=\bP$, where
$\bP^\circ$ is the identity component of $\bP$ and $P$ contains all
$\ell$-power torsion of $\bP$. The elements of $\Syl_\ell(\bP)$ form a single
$\bP$-conjugacy class.
\item We denote by $\overline \Syl_\ell(\bG)$ the set of all $\ell$-toral
 subgroups $\bS$ of $\bG$ such that the identity component $\bS^\circ$ is a
 maximal torus of $\bG$ and $\bS/\bS^\circ $ is a Sylow $\ell$-subgroup of
 $N_\bG(\bS^\circ)/\bS^\circ$; $ \Syl_\ell(\bG)$ is the set of all discrete
 $\ell$-toral subgroups $P$ of $\bG$ such that $\bar P\in\overline\Syl_\ell(\bG)$ and $P\in \Syl_\ell(\bar P)$. Any two elements of
 $\overline \Syl_\ell(\bG)$ are $\bG$-conjugate and each $\ell$-toral subgroup
 of $\bG$ is contained in an element of $\overline\Syl_\ell(\bG)$. Any two
 elements of $\Syl_\ell(\bG)$ are $\bG$-conjugate and each discrete
 $\ell$-toral subgroup of $\bG$ is contained in an element of $\Syl_\ell(\bG)$
 (see \cite[Prop.~9.3]{BLO07}). In particular, the elements of $\Syl_\ell(\bG)$
 are Sylow $\ell$-subgroups of $\bG$.
\end{itemize}

If $S \leq\bG$ is a Sylow $\ell$-subgroup then by \cite[Lemma~9.5]{BLO07},
$\cF:= \cF_{S}(\bG)$ is a saturated fusion system on $S$.

\begin{itemize}
\item An \emph{$\ell$-stubborn subgroup} of $\bG$ is an $\ell$-toral subgroup
 $\bP$ of $\bG$ such that $N_\bG(\bP)/\bP$ is a finite group which satisfies
 $O_\ell(N_\bG(\bP)/\bP)=1 $.
\item An $\ell$-toral subgroup $\bP \leq \bG$ is called \emph{$\ell$-centric}
 if $Z(\bP)\in \overline \Syl_\ell(C_{\bG}(\bP))$.
\item A discrete $\ell$-toral subgroup $P$ of a compact Lie group $\bG$ is said
 to be \emph{snugly embedded} in $\bG$ if $P \in \Syl_\ell(\bar P)$.
\end{itemize} 

The following well-known result is the first step in the proof of
Theorem~\ref{thm:compweights2} (see also \cite[Sec.~4]{BCGLS}).

\begin{lem}   \label{lem:radicalistubborn}
 Let $\bG$ be a compact Lie group with Sylow $\ell$-subgroup $S$ and let
 $\cF= \cF_S(\bG)$.
 \begin{enumerate}[\rm(a)]
  \item For any $\ell$-stubborn subgroup $\bP$ of $\bG$,
   $C_{\bG^\circ}(\bP)\leq Z(\bP)$ and if $\bG /\bG^\circ$ is an $\ell$-group,
   then $C_\bG(\bP)\leq Z(\bP)$, and consequently 
   $\Out_\bG(\bP)\cong N_\bG(\bP) /\bP$.
  \item Suppose that $P\leq S $ is $\cF$-centric and $\cF$-radical. Then
   $\bP:=\bar P$ is an $\ell$-stubborn subgroup of $\bG$ and
   $\Out_\cF(P) \cong \Out_\bG(\bP)$.
  \item The map $P\mapsto\bar P$ induces an injective map from the set of
   $\cF$-classes of $\cF$-centric, $\cF$-radical subgroups of $S$ to the set of
   $\bG$-classes of $\ell$-stubborn subgroups of $\bG$.
  \item If $\bG /\bG^\circ$ is an $\ell$-group or an $\ell'$-group, then the map
   in {\rm(c)} is a bijection. 
 \item Suppose that $\bG/\bG^\circ $ is an $\ell$-group. Let $\bar \cF$ be the
  orbit category of $\cF$ and $\bar \cF^{cr}$ be the full subcategory whose
  objects are $\cF$-centric, $\cF$-radical subgroups of $S$. Let $\cR_\ell(\bG)$
  be the full subcategory of the orbit category of $\bG$ whose objects are the
  $\ell$-toral subgroups of $\bG$. There is an equivalence of categories
  $\bar \cF^{cr} \simeq \cR_\ell(\bG)$ which for $\cF$-centric, $\cF$-radical
  subgroups $P$, $Q$ of $S$ sends $P$ to $\bG/\bar P$ and which sends the
  $\bar \cF^{cr}$-morphism from $P$ to $Q$ induced by conjugation by $g\in\bG$
  to the $\cR_\ell(\bG)$-morphism from $\bG/\bar P$ to $\bG/\bar Q$ defined by
  $x\bar P \to x g^{-1}\bar Q$, $x \in \bG$.
 \end{enumerate}
\end{lem}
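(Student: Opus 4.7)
The plan is to prove the five parts in order, using (a) as the common structural input for (b)--(e).

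For (a), the key mechanism is a ``connected-normal-in-finite'' argument: stubbornness says $N_\bG(\bP)/\bP$ is finite, and the closed subgroup $C_{\bG^\circ}(\bP)^\circ\bP$ lies in $N_\bG(\bP)$, so its image in $N_\bG(\bP)/\bP$ is a connected subgroup of a finite group, hence trivial. This forces $C_{\bG^\circ}(\bP)^\circ\leq\bP$, and since it commutes with $\bP$, it lies in $Z(\bP)$. The residual finite quotient $C_{\bG^\circ}(\bP)\bP/\bP$ is normal in $N_\bG(\bP)/\bP$, and using that $\bP^\circ$ is a torus with the $\ell$-group $\bP/\bP^\circ$ acting on it, compact Lie theory (finite $\ell$-group fixed points on a connected compact Lie group have $\ell$-group component quotient) makes this quotient an $\ell$-group, hence trivial by the stubborn condition. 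Under the extra hypothesis that $\bG/\bG^\circ$ is an $\ell$-group, the same reasoning applied to $C_\bG(\bP)\bP/\bP$ gives $C_\bG(\bP)\leq Z(\bP)\leq\bP$, and $\Out_\bG(\bP)=N_\bG(\bP)/\bP C_\bG(\bP)=N_\bG(\bP)/\bP$ follows immediately.

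For (b), set $\bP:=\bar P$. Since $P$ is $\cF$-centric, $C_\bG(P)$ has $Z(P)$ as its unique Sylow $\ell$-subgroup, and closure arguments identify $N_\bG(P)$ with $N_\bG(\bP)$ and $C_\bG(P)$ with $C_\bG(\bP)$ modulo $\bP$. Thus $\Out_\cF(P)=\Out_\bG(P)\cong\Out_\bG(\bP)$, and the $\ell$-stubbornness of $\bP$ follows: $N_\bG(\bP)/\bP$ is finite by (a), and its $O_\ell$-triviality is forced by the $\cF$-radical condition on $P$. For (c), injectivity holds because $\bar P$ determines $P$ up to $\bar P$-conjugacy as a member of $\Syl_\ell(\bar P)$, and any $\cF$-morphism $P\to P'$ extends to a $\bG$-conjugation $\bar P\to\bar P'$. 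For (d), in the $\ell$-group case I take $P\in\Syl_\ell(\bP)$ for each stubborn $\bP$ and use (a) to check that $P$ is $\cF$-centric ($Z(P)$ is Sylow in $C_\bG(P)$) and $\cF$-radical ($O_\ell(\Out_\cF(P))=1$ by $O_\ell(N_\bG(\bP)/\bP)=1$). In the $\ell'$-group case, every $\ell$-toral subgroup lies in $\bG^\circ$, so the $\ell$-stubborn subgroups of $\bG$ coincide with those of $\bG^\circ$, reducing to the connected case subsumed by the $\ell$-group case.

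Finally, (e) is a categorical consolidation under the $\ell$-group hypothesis on $\bG/\bG^\circ$: the assignment $P\mapsto\bG/\bar P$ is essentially surjective by (d), and full faithfulness reduces to identifying the transporter-based expressions $\Hom_{\bar\cF^{cr}}(P,Q)\cong N_\bG(P,Q)/P\,C_\bG(P)$ and $\Hom_{\cR_\ell(\bG)}(\bG/\bar P,\bG/\bar Q)\cong N_\bG(\bar P,\bar Q)/\bar P$, which agree after invoking (b). I expect the principal obstacle to be the component-group step in (a)---establishing that the finite group $C_{\bG^\circ}(\bP)\bP/\bP$ is an $\ell$-group, where the $\ell$-toral hypothesis on $\bP$ must be used in a nonformal way---together with the coherent bookkeeping of normalisers, centralisers, and transporters when replacing $P$ by $\bar P$ throughout (b)--(e).
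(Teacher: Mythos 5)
Your overall decomposition into parts and the dependency of (b)--(e) on (a) matches the paper's structure, but there are two substantive issues.

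First, for part (a) the paper simply cites \cite[Lemma~7]{O94}, whereas you sketch a proof. That is legitimate, and your opening move (the connected group $C_{\bG^\circ}(\bP)^\circ\bP/\bP$ is trivial in the finite group $N_\bG(\bP)/\bP$) is correct. But the step you flag as the crux — that the finite group $C_{\bG^\circ}(\bP)\bP/\bP$ is an $\ell$-group — is stated too loosely to be checkable. The fact you want is that $\pi_0$ of the fixed-point subgroup of a finite $\ell$-group acting on a \emph{connected} compact Lie group is an $\ell$-group; to apply it you must first pass from $\bG^\circ$ to the connected Levi subgroup $C_{\bG^\circ}(\bP^\circ)$ (connectedness of the centraliser of a torus in a connected compact group is the point), on which $\bP/\bP^\circ$ acts, and only then identify $C_{\bG^\circ}(\bP)$ with the fixed points of that action. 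Your phrasing ``$\bP^\circ$ is a torus with $\bP/\bP^\circ$ acting on it'' conflates these two steps and does not mention the Levi centraliser, which is where the connectedness needed for the fixed-point theorem actually comes from.

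Second, and more seriously, your treatment of part (d) in the $\ell'$-group case is wrong as stated. You observe (correctly) that every $\ell$-toral subgroup lies in $\bG^\circ$, so the $\ell$-stubborn subgroups of $\bG$ and $\bG^\circ$ coincide, and then claim this ``reduces to the connected case.'' But what must be shown is that $P$ is centric and radical in $\cF=\cF_S(\bG)$, not in $\cF_S(\bG^\circ)$, and these two fusion systems on $S$ need not agree when $\bG/\bG^\circ$ is an $\ell'$-group (e.g. $\bG=\OO(2)$, $\ell$ odd: $\cF_S(\bG^\circ)$ is trivial while $\cF_S(\bG)$ contains inversion). Being $\cF_S(\bG^\circ)$-centric or $\cF_S(\bG^\circ)$-radical does not imply the corresponding property for $\cF_S(\bG)$, since $C_\bG(P)\supseteq C_{\bG^\circ}(P)$ and $\Aut_\bG(P)\supseteq\Aut_{\bG^\circ}(P)$. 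The paper instead stays in $\bG$: it shows $\bP$ is stubborn in $\bG^\circ$, applies (a) to $\bG^\circ$ to get $C_{\bG^\circ}(\bP)=Z(\bP)$, and then uses that $C_\bG(\bP)/C_{\bG^\circ}(\bP)$ embeds in the finite $\ell'$-group $\bG/\bG^\circ$, so $Z(\bP)$ already contains all $\ell$-torsion of $C_\bG(\bP)$. That extra comparison of $C_\bG(\bP)$ with $C_{\bG^\circ}(\bP)$ is precisely what your reduction omits, and it is where the $\ell'$ hypothesis is really used.

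For (b), (c), (e) your sketch follows the paper's route, but it hides the technical backbone: one must know that an $\cF$-centric $\cF$-radical $P$ satisfies $P=P^\bullet$ (by \cite[Lemma~3.2, Cor.~3.5]{BLO07}), hence is snugly embedded (\cite[Lemma~9.9]{BLO07}), which is what licenses replacing $N_\bG(P)$, $C_\bG(P)$, $\Out_\bG(P)$ by their $\bP=\bar P$ analogues (\cite[Lemma~9.4]{BLO07}) and the $\ell$-centricity transfers (\cite[Lemmas~8.4, 9.6]{BLO07}). Without invoking the $\bullet$-construction, the claim that ``closure arguments identify $N_\bG(P)$ with $N_\bG(\bP)$'' is not justified.
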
 

\begin{proof}
Part (a) is Lemma~7 of \cite{O94}.
Let $P \leq S$ be $\cF$-centric and radical. By \cite[Lemma~3.2 and
Cor.~3.5]{BLO07}, $P = P^\bullet$, where $P^\bullet$ is as defined in Section~3
of \cite{BLO07} and hence by \cite[Lemma~9.9]{BLO07}, $P$ is snugly embedded
in~$\bG$. By Lemma~9.4 of \cite{BLO07} and its proof we have
$\Out_\cF(P) =\Out_\bG(P) \cong \Out_\bG(\bP)$.
Since $P$ is $\cF$-centric, by \cite[Lemma~8.4]{BLO07}, $P$ is $\ell$-centric in
$\bG$ and hence by \cite[Lemma~9.6(a),(b)]{BLO07}, $\bP$ is $\ell$-centric
in~$\bG$, $N_\bG(\bP)/\bP$ is finite and
$\bP C_\bG(\bP)/\bP\cong C_\bG(\bP)/Z(\bP)$ is finite of order prime to $\ell$.
On the other hand, since $\Out_\cF(P) \cong \Out_\bG(\bP) \cong N_\bG(\bP)/\bP C_\bG(\bP)$ and $P$ is $\cF$-radical, $O_\ell(N_\bG(\bP)/\bP C_\bG(\bP))= 1$,
hence $O_\ell(N_\bG(\bP)/\bP) \leq \bP C_\bG(\bP) /\bP $. Since the latter
is an $\ell'$-group, $O_\ell(N_\bG(\bP)/\bP) =1$, showing that $\bP$ is
$\ell$-stubborn. This proves (b).

The assignment $P\mapsto \bP:=\bar P$ induces a map from the set of
$\bG$-classes of discrete $\ell$-toral subgroups of $\bG$ which are snugly
embedded in $\bG$ to the set of $\bG$-classes of $\ell$-toral subgroups
of~$\bG$. By
\cite[Prop.~9.3] {BLO07}, this map is a bijection with the inverse being the map
which sends the $\bG$-class of an $\ell$-toral subgroup $\bP$ of $\bG$ to the
$\bG$-class of $P$, where $P \in \Syl_\ell(\bP)$. 
Now (c) follows from (b) and the fact that by the definition of $\cF$,
$P,P'\leq S$ are $\cF$-conjugate if and only if $P$ and $P'$ are $\bG$-conjugate.

Now suppose that $P\leq\bG$ is a snugly embedded subgroup such that
$\bP:= \bar P \leq\bG$ is $\ell$-stubborn. From the above, we see that in order
to prove (d) it suffices to show that $P$ is $\cF$-centric and $\cF$-radical
provided that $\bG/\bG^\circ$ is an $\ell$-group or an $\ell'$-group. For this,
note that as above since $P$ is snugly embedded in $\bG$,
$\Out_\bG(P)\cong\Out_\bG(\bP)$ by \cite[Lemma~9.4]{BLO07}. Suppose first that
$\bG/\bG^\circ$ is an $\ell$-group. By~(a), $C_\bG(\bP) =Z(\bP)$, hence
$\Out_\bG(P) = \Out_\cF(\bP) \cong N_\bG(\bP)/\bP$ and by hypothesis
$O_\ell(N_\bG(\bP)/\bP)=1$. This shows that $P$ is $\cF$-radical. Also, since
$C_\bG(\bP)\leq Z(\bP)$, $\bP$ is $\ell$-centric in $\bG$. Hence by
\cite[Lemma~9.6]{BLO07}, $P$ is $\ell$-centric in $\bG$ and consequently by
\cite[Lemma~8.4]{BLO07}, $P$ is $\cF$-centric. Now suppose that $\bG/\bG^\circ$
is an $\ell'$-group. We claim that $\bP\leq\bG^\circ$. Indeed, we have
$\bP = \bP^\circ.P$ and $\bP^\circ\leq\bG^\circ$. On the other hand, every
element of $P$ has finite order a power of $\ell$ and $\bG/\bG^\circ$ is an
$\ell'$-group, hence $P\leq\bG^\circ$. This proves the claim. Since $\bG^\circ$
is normal in $\bG$, $\bP$ is $\ell$-stubborn in $\bG^\circ$, hence again by
\cite[Lemma~7]{O94}, $C_{\bG^\circ}(\bP) =Z(\bP)$ and hence $C_\bG(\bP)/Z(\bP)
 = C_\bG(\bP)/C_{\bG^\circ}(\bP)\leq \bG/\bG^\circ$ is a finite $\ell'$-group.
This shows that $Z(\bP)$ contains all torsion elements of $\ell$-power order of
$C_\bG(\bP)$ and hence that $\bP$ is $\ell$-centric in $\bG$. Now it follows as
in the previous case that $P$ is $\cF$-centric. This completes the proof of (d).

Now suppose that $P, Q \leq S$ are $\cF$-centric and $\cF$-radical. As
above, $P$ and $Q$ are snugly embedded in $\bG$. By
\cite[Lemma~9.4(c)]{BLO07}, there is a bijection 
\[  \Inn(Q) \backslash \Hom_\bG(P,Q)
  \longrightarrow \Inn(\bQ)\backslash \Hom_\bG(\bP,\bQ) \]
induced by the canonical map from $\Hom_\bG(P,Q)$ to $\Hom_\bG(\bP,\bQ)$.
On the one hand, $\Hom_{\bar\cF^{cr}}(P,Q)$ may be identified with
$\Inn(Q)\backslash\Hom_\bG(P,Q)$. On the other hand, by (a), $C_\bG(\bar P) \leq Z(\bar P)$ from which it follows that 
$\Inn(\bQ)\backslash \Hom_\bG(\bP,\bQ)$ may be identified with 
$\mathrm{Mor}_{\cR_\ell(\bG)}(\bG/\bP,\bG/\bQ)$. Now~(e) follows by~(d).
\end{proof}

As an immediate consequence of the previous lemma we get:

\begin{lem}   \label{lem:crtostubborn}
 Suppose that $\bG$ is a compact connected Lie group, $\ell$ a prime,
 $S\in\Syl_\ell(\bG)$ a Sylow $\ell$-subgroup of $\bG$, $\cF= \cF_S(\bG)$. Then
 $$\bw(\cF) = \sum_\bP z(N_\bG(\bP)/\bP) $$
 where $\bP$ runs over a set of representatives of $\bG$-classes of
 $\ell$-stubborn subgroups of $\bG$ and $z(N_\bG(\bP)/\bP)$ is the number
 of irreducible characters of $N_\bG(\bP)/\bP$ of zero $\ell$-defect.
\end{lem}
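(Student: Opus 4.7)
The plan is to read off the identity directly from Lemma~\ref{lem:radicalistubborn} applied in the connected case, where the hypothesis $\bG=\bG^\circ$ makes the quotient $\bG/\bG^\circ$ trivial and hence simultaneously an $\ell$-group and an $\ell'$-group.

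First I would apply Lemma~\ref{lem:radicalistubborn}(d): since $\bG/\bG^\circ$ is trivial, the map $P \mapsto \bar P$ induces a bijection between the set of $\cF$-conjugacy classes of $\cF$-centric, $\cF$-radical subgroups of $S$ and the set of $\bG$-conjugacy classes of $\ell$-stubborn subgroups of $\bG$. This lets me re-index the sum
$$\bw(\cF) = \sum_Q z(\Out_\cF(Q))$$
from the definition, running now over representatives $\bP$ of $\bG$-classes of $\ell$-stubborn subgroups, with $Q$ corresponding to a (snugly embedded) $P\in\Syl_\ell(\bP)$ that is $\cF$-centric and $\cF$-radical.

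Next I would identify each summand. For such a pair $(P,\bP)$, Lemma~\ref{lem:radicalistubborn}(b) supplies $\Out_\cF(P)\cong\Out_\bG(\bP)$, and since $\bG/\bG^\circ$ is (trivially) an $\ell$-group, Lemma~\ref{lem:radicalistubborn}(a) gives $C_\bG(\bP)\leq Z(\bP)$ and therefore
$$\Out_\bG(\bP) \;\cong\; N_\bG(\bP)/\bP.$$
Consequently $z(\Out_\cF(P)) = z(N_\bG(\bP)/\bP)$, and substituting into the re-indexed sum yields the claimed formula.

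There is essentially no obstacle: the lemma is a bookkeeping corollary of parts (a), (b) and (d) of Lemma~\ref{lem:radicalistubborn}. The only point that deserves a moment's care is that the bijectivity conclusion in part~(d) needs $\bG/\bG^\circ$ to be an $\ell$-group \emph{or} an $\ell'$-group; connectedness supplies both, so the identification of the two sets of conjugacy classes is unambiguous, and the finiteness of $N_\bG(\bP)/\bP$ coming from $\ell$-stubbornness ensures the zero $\ell$-defect character count $z(N_\bG(\bP)/\bP)$ is well defined.
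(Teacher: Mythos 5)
Your proof is correct and is exactly the argument the paper has in mind: the authors state the lemma as an "immediate consequence" of Lemma~\ref{lem:radicalistubborn} without spelling it out, and you have supplied the expected bookkeeping via parts (a), (b), and (d), using connectedness to make $\bG/\bG^\circ$ trivial so that both the bijectivity in (d) and the identification $\Out_\bG(\bP)\cong N_\bG(\bP)/\bP$ from (a) apply.
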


\subsection{Fusion systems of connected reductive algebraic groups.}\label{subsec:red}
Let $p,\ell$ be distinct prime numbers and let $\bG$ be a connected reductive
algebraic group over $\overline{\FF}_p$. Then since $\bG$ has a finite
dimensional faithful representation over $\overline{\FF}_p$, $\bG$ is a linear
torsion group in characteristic $p$ (in the sense of \cite[Sec.~8]{BLO07}).
Hence, by \cite[Thm~8.10]{BLO07}, $\bG$ has a Sylow $\ell$-subgroup $S$ and
$\cF_S(\bG)$ is a saturated fusion system. Also, $\bG$ is the group of
$\overline{\FF}_p$-points of a connected split reductive algebraic group scheme
$\GG$ over $\ZZ$. By a theorem of Friedlander--Mislin \cite[Thm~1.4]{FM84},
there is a homotopy equivalence of $\ell$-completed classifying spaces
\[ B\GG(\CC)^\wedge_\ell \simeq (B\bG)^\wedge_\ell. \]
Further, by the Cartan--Chevalley--Iwasawa--Malcev--Mostow theorem (see
\cite[Chap.~XV, Thm~3.1]{Ho}), $\GG(\CC)$ has a maximal compact
subgroup $K$ and there is a homotopy equivalence $B\GG(\CC) \simeq BK $. Hence,
\[  BK^\wedge_\ell \simeq B\GG(\CC)^\wedge_\ell \simeq(B\bG)^\wedge_\ell \]
and as a consequence of \cite[Thms~7.4, 8.10, and 9.10]{BLO07} we obtain the
following.

\begin{prop}   \label{prop:compactalgebraic}
 With the above notation there is an isomorphism $ S'\cong S$ between a Sylow
 $\ell$-subgroup $S'$ of $K$ and a Sylow $\ell$-subgroup $S$ of $\bG$ inducing
 an isomorphism $\cF_{S'}(K)\cong \cF_S(\bG)$ of the corresponding fusion
 systems.
\end{prop}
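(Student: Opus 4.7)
The plan is to combine the chain of homotopy equivalences $BK^\wedge_\ell \simeq B\GG(\CC)^\wedge_\ell \simeq (B\bG)^\wedge_\ell$ already recorded just before the statement with the three theorems of \cite{BLO07} cited in the proposition. As a first step, I would note that since $\bG$ possesses a faithful finite-dimensional representation over $\overline{\FF}_p$, it is a linear torsion group in characteristic $p$ in the sense of \cite[Sec.~8]{BLO07}, so by \cite[Thm~8.10]{BLO07} it admits a Sylow $\ell$-subgroup $S$ such that $\cF_S(\bG)$ is a saturated fusion system, and the classifying space of the associated centric linking system is homotopy equivalent to $(B\bG)^\wedge_\ell$. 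Analogously, applying \cite[Thm~9.10]{BLO07} to the compact Lie group $K$ produces a Sylow $\ell$-subgroup $S' \leq K$ with saturated fusion system $\cF_{S'}(K)$, whose associated classifying space is homotopy equivalent to $(BK)^\wedge_\ell$.

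Next I would invoke \cite[Thm~7.4]{BLO07}, which asserts that the saturated fusion system of a discrete $\ell$-toral group (together with its centric linking system) is determined up to isomorphism by the homotopy type of the associated $\ell$-completed classifying space, and moreover that a homotopy equivalence between two such classifying spaces induces an isomorphism between the underlying discrete $\ell$-toral Sylow subgroups which carries one fusion system to the other. Feeding the displayed chain of equivalences $(BK)^\wedge_\ell \simeq (B\bG)^\wedge_\ell$ into this reconstruction principle yields an isomorphism $S' \cong S$ together with a compatible isomorphism $\cF_{S'}(K) \cong \cF_S(\bG)$, which is exactly the conclusion sought.

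The step requiring the most care, and which I would verify explicitly, is the identification of the abstract fusion system recovered via \cite[Thm~7.4]{BLO07} from $(B\bG)^\wedge_\ell$ with the concrete fusion system $\cF_S(\bG)$ defined by $\bG$-conjugation (and similarly for $K$); this compatibility is precisely what is encoded in the construction parts of \cite[Thms~8.10 and 9.10]{BLO07}, so once these are applied no genuine obstacle remains. Since no counting, case analysis, or structural classification enters the argument, the proof amounts to a formal chase through the cited theorems, with the homotopy equivalence $B\GG(\CC) \simeq BK$ coming from the Cartan--Chevalley--Iwasawa--Malcev--Mostow theorem and the equivalence $B\GG(\CC)^\wedge_\ell \simeq (B\bG)^\wedge_\ell$ from Friedlander--Mislin as already recalled.
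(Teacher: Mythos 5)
Your proposal is correct and follows precisely the route the paper indicates: the paper gives no separate argument beyond citing \cite[Thms~7.4, 8.10, 9.10]{BLO07} together with the chain of homotopy equivalences $(BK)^\wedge_\ell \simeq B\GG(\CC)^\wedge_\ell \simeq (B\bG)^\wedge_\ell$ established just above. You have simply made explicit the roles of the three cited theorems (existence of Sylow $\ell$-subgroups and saturated fusion systems with the right classifying spaces via Thms~8.10 and 9.10, and the rigidity statement of Thm~7.4 transporting the homotopy equivalence to an isomorphism of fusion systems), which is exactly the intended reading.
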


\section{Proofs of Theorem~\ref{thm:compweights2} and Theorem~\ref{thm:compweights3}.}   \label{sec:proof}

We recall some notation from Section 7 of \cite{KMS}. Denote by $\cC$ the set of
finite sequences $\fc=(c_1,\ldots,c_t)$ of strictly positive integers including
the empty sequence $()$. For $\fc\in\cC$, write $|\fc|:=c_1+\cdots+c_t$ and let
$G_\fc:=\GL_{c_1}(\ell)\times\cdots\times\GL_{c_t}(\ell)$. Denote by $\cA(\fc)$
the set of irreducible characters of
$G_\fc=\GL_{c_1}(\ell)\times\cdots\times\GL_{c_t}(\ell)$ of the form
$\chi_1\cdots\chi_t$, where each $\chi_j$ is an extension to $\GL_{c_j}(\ell)$
of a Steinberg character of $\SL_{c_j}(\ell)$. Note that $|\cA(\fc)| =(\ell-1)^t$. 

We recall the description of $\ell$-stubborn subgroups in classical groups due
to Oliver \cite{O94}. As noted already by Oliver this is akin to the
Alperin--Fong type classification of $\ell$-radical subgroups of finite general
linear and symmetric groups in \cite{AF90}.

\begin{prop}   \label{prop:oliver}
 Suppose that $\bG$ is the compact connected Lie group $\U(n)$, $\Sp(n)$,
 $\SO(2n+1)$ or $\SO(2n)$, $n\geq 1$. Let $\ell$ be a prime, assumed odd unless
 $\bG=\U(n)$. If $\bG\ne\U(n)$, let $\cX$ denote the set of functions
 $$ f: \NN \times \cC \to \NN $$
 such that $\sum_{(\ga,\fc)} \ell^{|\ga| +|\fc|} f(\ga,\fc) = n $. If
 $\bG = \U(n)$, let $\cX$ denote the subset of the above set of functions $f$ 
 which additionally have the property $f(0,())\ne 2,4$ if $\ell=2$ and
 $f(0,()) \ne 3$ if $\ell=3$. 
 There is a bijection between the set of $\bG$-conjugacy classes of
 $\ell$-stubborn subgroups of $\bG$ and $\cX$ satisfying the following: Let
 $\bP \leq \bG$ be an $\ell$-stubborn subgroup whose class corresponds to
 $f\in\cX$.
 \begin{enumerate}[\rm(a)]
  \item If $\bG = \U(n)$, then
   \[ N_\bG(\bP)/\bP\cong \prod_{(\ga,\fc)}(\Sp_{2\ga}(\ell)\times G_\fc)\wr \fS_{f(\ga,\fc)}.\]
  \item If $\bG =\Sp(n)$ or $\bG =\SO(2n+1)$, then
   $$ N_\bG(\bP)/\bP\cong \prod_{(\ga,\fc)}(C_2\times\Sp_{2\ga}(\ell) \times G_\fc)\wr\fS_{f(\ga,\fc)}.$$
  \item If $\bG =\SO(2n)$, then $N_\bG(\bP)/\bP$ is isomorphic to the
   subgroup of
   $$\prod_{(\ga,\fc)}(C_2\times\Sp_{2\ga}(\ell)\times G_\fc)\wr\fS_{f(\ga,\fc)}$$
   consisting of those elements for which the number of non-trivial entries
   from the $C_2$ components is even.
 \end{enumerate}
\end{prop}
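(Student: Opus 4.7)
My plan for proving Proposition~\ref{prop:oliver} is to invoke the classification of $\ell$-stubborn subgroups of classical compact Lie groups due to Oliver \cite{O94}, with the refinements for $\U(n)$ at $\ell=2$ from \cite{JMO95} and \cite{Vi01}, and to match the resulting normal forms to the functions $f\in\cX$. Since the hard classification work is already done in those references, the task here reduces to organizing the combinatorial bookkeeping.

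For the unitary case, the basic building blocks are indexed by pairs $(\gamma,\fc)\in\NN\times\cC$. The essentially unique faithful irreducible representation of the extraspecial $\ell$-group $\Gamma_\gamma$ of order $\ell^{1+2\gamma}$ embeds $\Gamma_\gamma$ into $\U(\ell^\gamma)$ with normalizer quotient $\Sp_{2\gamma}(\ell)$, acting as the outer automorphism group of $\Gamma_\gamma$ via its natural action on $\Gamma_\gamma/Z(\Gamma_\gamma)\cong\FF_\ell^{2\gamma}$. For each composition $\fc=(c_1,\ldots,c_t)$, an iterated wreath product $\Delta_\fc$ of diagonal $\ell$-power matrices sits inside $\U(\ell^{|\fc|})$ with normalizer quotient $G_\fc$. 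The tensor product $\Gamma_\gamma\otimes\Delta_\fc$ is then a basic $\ell$-stubborn subgroup of $\U(\ell^{\gamma+|\fc|})$ with normalizer quotient $\Sp_{2\gamma}(\ell)\times G_\fc$. Oliver shows that every $\ell$-stubborn subgroup of $\U(n)$ is, up to conjugacy, an orthogonal direct sum of such blocks with $f(\gamma,\fc)$ copies of the $(\gamma,\fc)$-type summand; permuting isomorphic blocks produces the $\fS_{f(\gamma,\fc)}$ wreath factor, and the dimension identity $\sum \ell^{\gamma+|\fc|}f(\gamma,\fc)=n$ is forced. The exclusions on $f(0,())$ at $\ell\in\{2,3\}$ correspond to small-rank degeneracies where the candidate purely toral block fails the condition $O_\ell(N_\bG(\bP)/\bP)=1$.

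For the symplectic and orthogonal cases (b) and (c), I would use the fact that each basic block must carry the invariant form: this forces the irreducible $\Gamma_\gamma$-representation to be real or quaternionic of the appropriate type and introduces into the normalizer an extra involution (given by negating the form on each summand), accounting for the additional $C_2$ factor. The index-two refinement in (c) arises from the determinant-one condition on $\SO(2n)$: only those wreath-product elements involving an even total number of such negating involutions lie in the special orthogonal subgroup.

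The principal obstacle, which I would defer to \cite{O94}, is the existence part of the classification, namely proving that no $\ell$-stubborn subgroups beyond those built from the basic blocks occur. This requires a representation-theoretic argument that the only primitive faithful irreducible representations producing a stubborn subgroup are those of extraspecial type, together with an inductive reduction from the imprimitive to the primitive case. Granting this input, the enumeration of conjugacy classes by $f\in\cX$ and the identification of the normalizer structures are direct computations with the actions on $\Gamma_\gamma/Z(\Gamma_\gamma)$ and on the blocks of $\Delta_\fc$.
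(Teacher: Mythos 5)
Your overall strategy---invoking Oliver's classification from \cite{O94}, building $\ell$-stubborn subgroups as products of basic blocks and reading off the normalizer quotients, with the determinant-one condition cutting down to an index-two subgroup in the $\SO(2n)$ case---is exactly the route the paper takes, so the plan is sound. However, several details in your sketch are inaccurate in ways worth correcting. First, the exclusions $f(0,())\ne 2,4$ (for $\ell=2$) and $f(0,())\ne 3$ (for $\ell=3$) come from Oliver's own Definition~2 and Theorem~8 in \cite{O94}, not from \cite{JMO95} or \cite{Vi01}, which concern $G_2$ and $F_4$ respectively and play no role here. Second, the basic building block in the compact setting is not the finite extraspecial group of order $\ell^{1+2\gamma}$ but the $\ell$-toral group $\Gamma_{\ell^\gamma}^U$, whose identity component is a circle; an $\ell$-stubborn subgroup must be $\ell$-toral, so the finite extraspecial group is not the right object. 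Correspondingly, Oliver's normal form is the iterated wreath product $\Gamma_{\ell^\gamma}^U\wr E_{\ell^{c_1}}\wr\cdots\wr E_{\ell^{c_t}}$ rather than a tensor product of $\Gamma$ with a separate group $\Delta_\fc$.

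Third, the extra $C_2$ factor in cases (b) and (c) is more cleanly obtained, as the paper does, by viewing $\bR_{\gamma,\fc}\leq\U(m)$ under the standard embeddings $\U(m)\leq\Sp(m)$ and $\U(m)\leq\OO(2m)$ and reading off the enlarged normalizer from \cite[Thm~6]{O94}; your appeal to the real or quaternionic type of a representation is a heuristic, not a proof, and in particular does not by itself identify on which side of $\SO(2m)$ the various lifts land, which is precisely the information needed for~(c). Finally, your proposal does not address the $\SO(2n+1)$ case separately: the paper passes to $\OO(2n+1)=\SO(2n+1)\times\{\pm I\}$ and then projects away the $\OO(1)$ factor, which is genuinely different from the $\SO(2n)\leq\OO(2n)$ index-two analysis. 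Granting the classification from \cite{O94}, your argument would work once these points are repaired, but as written the orthogonal cases in particular are underspecified.
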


\begin{proof}
For $\fc\in\cC$ and a non-negative integer $\ga$ let $\bR_{\ga,\fc}$ denote the
irreducible subgroup $\Gamma_{\ell^\ga}^U\wr E_{\ell^{c_1}}\wr \cdots\wr
E_{\ell^{c_t}}$ of $\U(\ell^{\ga+|\fc|})$ defined in \cite[Def.~2]{O94}. We
regard $\bR_{\ga,\fc}$ as a subgroup of $\Sp(\ell^{\ga+|\fc|})$ (respectively
$\SO(2\ell^{\ga+|\fc|})$ via a standard embedding $\U(\ell^{\ga+|\fc|})\leq \Sp(\ell^{\ga+|\fc|})$ (respectively $\U(\ell^{\ga+|\fc|}) \leq \OO(2\ell^{\ga+|\fc|})$. By \cite[Thm~6]{O94},
\[N_{\U(\ell^{\ga+|\fc|})}(\bR_{\ga,\fc}) / \bR_{\ga,\fc} \cong \Sp_{2\ga}(\ell) \times G_\fc , \]
\[N_{\Sp(\ell^{\ga+|\fc|})}(\bR_{\ga,\fc}) / \bR_{\ga,\fc}\cong C_2 \times \Sp_{2\ga}(\ell) \times G_\fc, \]
and
\[N_{\OO(2\ell^{\ga+|\fc|})}(\bR_{\ga,\fc}) /\bR_{\ga,\fc} \cong C_2 \times \Sp_{2\ga}(\ell) \times G_\fc . \]
Moreover, from the proof of \cite[Thm~6]{O94} it can be checked that the full
inverse image of $\Sp_{2\ga}(\ell)\times G_\fc$ in
$N_{\OO(2\ell^{\ga+|\fc|})}(\bR_{\ga,\fc})$ lies in $\SO(2\ell^{\ga+|\fc|})$,
and that the generator of the $C_2$ factor lifts to an element of determinant
$-1$ in $\OO(2\ell^{\ga+|\fc|})$, specifically to an element of order $2$ with
$-1$ as an eigenvalue of multiplicity $\ell^{\ga+|\fc|}$.

If $\bR $ is a direct product of $m$ copies of $\bR_{\ga,\fc}$ diagonally
embedded via 
\[ \U(\ell^{\ga+|\fc|})\times\cdots\times\U(\ell^{\ga+|\fc|})
  \leq \U(m\ell^{\ga+|\fc|}), \]
then 
\[N_{\U(m\ell^{\ga+|\fc|}) }(\bR) /\bR \cong (N_{\U(\ell^{\ga+|\fc|})}(\bR_{\ga,\fc}) / \bR_{\ga,\fc}) \wr \fS_m \cong (\Sp_{2\ga}(\ell) \times G_\fc) \wr \fS_m\]
where $\fS_m$ acts by usual permutation of factors. Similarly, regarding $\bR$
as a diagonally embedded subgroup via
$\Sp(\ell^{\ga+|\fc|})^m\leq\Sp(m\ell^{\ga+|\fc|})$, then 
\[N_{\Sp(m\ell^{\ga+|\fc|}) }(\bR) /\bR \cong N_{\Sp(\ell^{\ga+|\fc|})}(\bR_{\ga,\fc}) / \bR_{\ga,\fc} \wr \fS_m\cong (C_2 \times \Sp_{2\ga}(\ell) \times G_\fc) \wr \fS_m\] 
and regarding $\bR$ as a diagonally embedded subgroup via
$\OO(2\ell^{\ga+|\fc|})^m \leq \OO(2m\ell^{\ga+|\fc|})$, then 
\[N_{\OO(2m\ell^{\ga+|\fc|}) }(\bR) /\bR \cong N_{\OO(2\ell^{\ga+|\fc|})}(\bR_{\ga,\fc}) / \bR_{\ga,\fc} \wr \fS_m \cong (C_2 \times \Sp_{2\ga}(\ell) \times G_\fc) \wr \fS_m.\] 
Note that since the space underlying $\bR_{\ga,\fc}$ in the orthogonal case is
even-dimensional, the elements of $\fS_m$ lift to elements of determinant $1$
in $\OO(2m\ell^{\ga+|\fc|})$.

Suppose that $\bG=\U(n)$. Then (a) follows from \cite[Thms~6 and~8]{O94} (see
also the last part of Definition~2 of \cite{O94}) with the bijection between
$\cX$ and the $\bG$-conjugacy classes of $\ell$-stubborn subgroups sending
$f\in\cX$ to the class of 
\[\bP = \prod_{(\ga,\fc)} \bR_{\ga,\fc}^{f(\ga,\fc)} \leq \prod_{(\ga,\fc)} \U(\ell^{\ga+|\fc|})^{f(\ga,\fc)} \leq \prod_{(\ga,\fc)} \U(f(\ga,\fc)\ell^{\ga+|\fc|}) \leq \U(n),\]
where the last inclusion is via a decomposition of the vector space underlying
$\U(n)$, and where 
\[N_\bG(\bP)/\bP \cong \prod_{(\ga,\fc)}N_{\U(f(\ga,\fc)\ell^{\ga+|\fc|})}(\bR_{\ga,\fc}^{f(\ga,\fc) })/\bR_{\ga,\fc}^{f(\ga,\fc) } \cong \prod_{(\ga,\fc)}(\Sp_{2\ga}(\ell)\times G_\fc) \wr \fS_{f(\ga,\fc)},\]
proving (a). The proof for the case that $\bG = \Sp(n)$ is entirely similar.
 
Suppose that $\bG= \SO(2n) $ and set $\hat \bG = \OO(2n)$. Since $\bG$ is the
connected component of $\hat \bG$ of index $2$, and $\ell $ is odd, the set of
$\ell$-stubborn subgroups of $\bG$ coincides with that of $\hat\bG$. 
By \cite[Thms~6 and~8]{O94}, the set of $\hat\bG$-conjugacy classes of
$\ell$-stubborn subgroups is in one to one correspondence with $\cX$ in the
same way as above where 
\[ \bP =\prod_{(\ga,\fc)} \bR_{\ga,\fc}^{f(\ga,\fc)} \leq \prod_{(\ga,\fc)}\U(\ell^{\ga+|\fc|})^{f(\ga,\fc)} \leq \prod_{(\ga,\fc)} \OO(2\ell^{\ga+|\fc|})^{f(\ga,\fc)} \leq \prod_{(\ga,\fc)} \OO(2f(\ga,\fc)\ell^{\ga+|\fc|}) \leq \OO(2n) \]
and 
\[ N_{\hat \bG}(\bP)/\bP \cong \prod_{(\ga,\fc)}(C_2 \times \Sp_{2\ga}(\ell) \times G_\fc)\wr \fS_{f(\ga,\fc)} \leq \prod_{(\ga,\fc)} \OO(2f(\ga,\fc)\ell^{\ga+|\fc|}) \leq \OO(2n). \] 
From the description above, it follows that $N_\bG(\bP)$ is the index $2$-subgroup of $N_{\hat \bG}(\bP)$ described in the statement of (c), and in particular, the $\hat \bG$-conjugacy class of $\bP$ is the same as the $\bG$-conjugacy class of $\bP$. This proves (c).
 
Finally suppose that $\bG = \SO(2n+1)$, and set $\hat\bG =\OO(2n+1)$. Then since
$\hat \bG =\bG\times\{\pm I\}$ and $\ell$ is odd, the $\bG$-conjugacy classes of
$\ell$-stubborn subgroups of $\bG$ are the same as the $\hat\bG$-conjugacy
classes of $\ell$-stubborn subgroups of $\hat \bG$. By \cite[Thms~6 and~8]{O94},
the set of $\hat \bG$-conjugacy classes of $\ell$-stubborn subgroups of
$\hat\bG$ are in one to one correspondence with $\cX$ in the same way as above
where now 
\[ \bP =\prod_{(\ga,\fc)} \bR_{\ga,\fc}^{f(\ga,\fc)} \leq \prod_{(\ga,\fc)}\U(\ell^{\ga+|\fc|})^{f(\ga,\fc)} \leq \prod_{(\ga,\fc)} \OO(2\ell^{\ga+|\fc|})^{f(\ga,\fc)} \leq \OO(2n)\times \OO(1) \leq \OO(2n+1) \]
and 
\[ N_{\hat \bG}(\bP)/\bP \cong \big(\prod_{(\ga,\fc)}(C_2 \times \Sp_{2\ga}(\ell) \times G_\fc)\wr \fS_{f(\ga,\fc)} \big) \times \OO(1)
 \leq \OO(2n) \times \OO(1) \leq \OO(2n+1). \]
Here, note that $\OO(1)\cong C_2$. Now the restriction to $N_\bG(\bP)/\bP$ of
the canonical surjection $N_{\hat\bG}(\bP)/\bP\to N_{\hat\bG}(\bP)/(\bP\times\OO(1))$ is an isomorphism and this proves (b).
\end{proof} 

\begin{thm}   \label{thm:compweights}
 Suppose that $\bG$ is the compact connected Lie group $\U(n)$, $\Sp(n)$, or
 $\SO(n)$, $n\geq 1$. Let $\ell$ be a prime, assumed odd unless $\bG= \U(n)$,
 $S\in\Syl_\ell(\bG)$ a Sylow $\ell$-subgroup of $\bG$, $\cF= \cF_S(\bG)$ the
 associated fusion system and $W$ the Weyl group of $\bG$. Then,
 $$ \bw(\cF) = |\Irr(W)|.$$
\end{thm}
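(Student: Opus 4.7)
The plan is to combine Lemma~\ref{lem:crtostubborn} with Proposition~\ref{prop:oliver} and rewrite
\[\bw(\cF)=\sum_{f\in\cX}z(N_\bG(\bP_f)/\bP_f),\]
where $N_\bG(\bP_f)/\bP_f$ is a product over the pairs $(\ga,\fc)$ of wreath products $H_{\ga,\fc}\wr\fS_{f(\ga,\fc)}$. Here $H_{\ga,\fc}=\Sp_{2\ga}(\ell)\times G_\fc$ for $\bG=\U(n)$ and has an additional $C_2$ factor otherwise; for $\bG=\SO(2n)$ the quotient $N_\bG(\bP_f)/\bP_f$ is moreover the index-two subgroup $L_f$ of even $C_2$-parity.

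The key reduction is a wreath-product identity. Comparing the degree formula $\chi_\mu(1)=\frac{m!}{\prod_\chi|\mu(\chi)|!}\prod_\chi\chi(1)^{|\mu(\chi)|}\chi_{\mu(\chi)}(1)$ with $|H\wr\fS_m|_\ell=|H|_\ell^m\,m!_\ell$, one checks that the $\ell$-defect-zero irreducibles of $H\wr\fS_m$ are parametrized precisely by the tuples $(\kappa_\chi)_{\chi\in\Irr_0(H)}$ of $\ell$-core partitions with $\sum_\chi|\kappa_\chi|=m$. Equivalently,
\[\sum_{m\geq 0}z(H\wr\fS_m)\,x^m=C(x)^{z(H)},\qquad C(x):=\sum_{\kappa\text{ an $\ell$-core}}x^{|\kappa|}=\prod_{j\geq 1}\frac{(1-x^{\ell j})^\ell}{1-x^j}.\]
Two standard defining-characteristic computations now give $z(\Sp_{2\ga}(\ell))=1$ (Steinberg is the unique defect-zero character of a simply connected group) and $z(\GL_c(\ell))=\ell-1$ (every defect-zero character restricts to the Steinberg of $\SL_c(\ell)$, and $[\GL_c\colon\SL_c]=\ell-1$ counts its extensions), whence $z(H_{\ga,\fc})$ equals $(\ell-1)^{t(\fc)}$ in type $A$ and $2(\ell-1)^{t(\fc)}$ otherwise (with $t(\fc)$ the length of $\fc$).

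Summing over $f\in\cX$ weighted by $x^{\sum\ell^{\ga+|\fc|}f(\ga,\fc)}$ yields
\[\sum_{n\geq 0}\bw(\cF_n)\,x^n=\prod_{(\ga,\fc)\in\NN\times\cC}C(x^{\ell^{\ga+|\fc|}})^{z(H_{\ga,\fc})}.\]
For $\bG=\U(n)$, regrouping by $s:=\ga+|\fc|$ and using $\sum_{t=1}^s\binom{s-1}{t-1}(\ell-1)^t=(\ell-1)\ell^{s-1}$ collapses the total exponent of $C(x^{\ell^s})$ to $\ell^s$, and $\prod_{s\geq 0}C(x^{\ell^s})^{\ell^s}$ telescopes, on substituting the product formula for $C$, to $\prod_{j\geq 1}(1-x^j)^{-1}=\sum_np(n)\,x^n=\sum_n|\Irr(\fS_n)|\,x^n$. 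For $\bG=\Sp(n)$ or $\SO(2n+1)$ the exponent is simply doubled, yielding the square $\prod_j(1-x^j)^{-2}=\sum_n p_2(n)\,x^n=\sum_n|\Irr(B_n)|\,x^n$. The bad-prime exclusions on $\cX$ in the $\U(n)$ statement at $\ell\in\{2,3\}$ are invisible, since the excluded values of $f(0,())$ give $z(\fS_{f(0,())})=0$ (no $\ell$-core of that size).

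The case $\bG=\SO(2n)$ requires one further step: Clifford theory applied to the index-two inclusion $L_f\leq K_f$ with quotient character $\epsilon\colon K_f\to C_2$ (the total $C_2$-parity) gives $z(L_f)=(z(K_f)+3z_B(K_f))/2$, where $z_B(K_f)$ counts defect-zero $\chi\in\Irr(K_f)$ with $\chi\epsilon\cong\chi$. Since $\epsilon$ factors over the $(\ga,\fc)$-components and acts on $\Irr_0(H_{\ga,\fc})$ by swapping the two $C_2$-labels, the wreath-product bijection identifies the $\epsilon$-fixed defect-zero characters with matched pairs of $\ell$-cores; consequently $\sum_{n,f}z_B(K_f)\,x^n$ is the type-$A$ generating function evaluated at $x^2$, namely $\prod_j(1-x^{2j})^{-1}=\sum_{n\text{ even}}p(n/2)\,x^n$. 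Combining gives $\sum_fz(L_f)=p_2(n)/2+(3/2)\,p(n/2)\cdot[n\text{ even}]=|\Irr(D_n)|$, as required. I expect the main obstacle to be precisely this Clifford bookkeeping for $\SO(2n)$; the other cases reduce rather mechanically once the wreath-product identity and telescoping structure are in place.
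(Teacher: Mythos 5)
Your proposal is correct and gives a genuinely different route to the conclusion than the paper. Both proofs start from the same place: Lemma~\ref{lem:crtostubborn} plus the Oliver classification in Proposition~\ref{prop:oliver}, and both exploit the fact that defect-zero characters of $H\wr\fS_m$ are parametrised by $\Irr_0(H)$-tuples of $\ell$-cores. From that point, however, the paper translates the count into a bijective statement about assignments $w\colon\NN\times\bigcup_\fc\cA(\fc)\to\{\ell\text{-cores}\}$ and invokes \cite[Prop.~9.3]{KMS} and \cite[Lemma~7.15]{KMS} (with the appropriate parameters $e,r$) to match this set with $\Irr(G(e,r,n))$; the $\SO(2n)$ case is handled there by counting $\ZZ/2\ZZ$-orbits weighted by stabiliser orders. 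You instead package the whole computation into generating functions: the $\ell$-core product formula $C(x)=\prod_j(1-x^{\ell j})^\ell/(1-x^j)$, the binomial collapse $\sum_{\ga+|\fc|=s}(\ell-1)^{t(\fc)}=\ell^s$, the telescoping $\prod_sC(x^{\ell^s})^{\ell^s}=\prod_j(1-x^j)^{-1}$, and then doubling of the exponent for the extra $C_2$. For $\SO(2n)$ you do direct Clifford bookkeeping for the index-two inclusion, and the formula $z(L_f)=(z(K_f)+3z_B(K_f))/2$ (valid because $\ell$ is odd, so halving degrees does not change $\ell$-defect) is handled correctly, as is the identification of $\sum z_B(K_f)x^n$ with the type $A$ series at $x^2$; the final match with $|\Irr(W(D_n))|=\tfrac12 p_2(n)+\tfrac32 p(n/2)$ (the second term present only for $n$ even) is exactly right. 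Your observation that the $\ell\in\{2,3\}$ exclusions in $\cX$ for $\U(n)$ are invisible because $z(\fS_2)=z(\fS_4)=0$ at $\ell=2$ and $z(\fS_3)=0$ at $\ell=3$ also closes the one potential gap. The trade-off is that the paper's route outsources the combinatorics to already-proved lemmas in \cite{KMS} (which were designed for exactly this) and so is shorter, whereas your argument is self-contained and makes the arithmetic of $\ell$-cores explicit at the cost of a somewhat longer manipulation.
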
 

\begin{proof}
By Lemma~\ref{lem:crtostubborn} we have
\[\bw(\cF) = \sum_{\bP } z(N_\bG(\bP)/\bP) \]
where $\bP$ runs over representatives of $\bG$-conjugacy classes of
$\ell$-stubborn subgroups of $\bG$ and $z(N_\bG(\bP)/\bP)$ is the number of
irreducible characters of $N_\bG(\bP)/\bP$ of zero $\ell$-defect.

Suppose first that $\bG=\U(n)$. Let $\cX$ be as in
Proposition~\ref{prop:oliver},
let $f\in\cX$ and let $\bP$ be an $\ell$-stubborn of $\bG$ corresponding to $f$.
The only characters of zero $\ell$-defect of $\GL_c(\ell)$ or $\Sp_{2\ga}(\ell)$
are Steinberg characters and $\Sp_{2\ga}(\ell)$ has a unique Steinberg character
which we denote by $\chi_{\ga}$. Thus by Proposition~\ref{prop:oliver}(i) and
\cite[Prop.~9.3]{KMS}, the weights contributed by $\bP$ are in bijection with
the set
\[{\cW}_f:=\Big\{w :\bigcup_{(\ga,\fc)} \{\chi_\ga\} \times \cA(\fc) \to\{\ell\text{-cores}\} \quad
 \text{with}\sum_{\vhi\in\{\chi_\ga\}\times\cA(\fc)} |w(\vhi)| = f(\ga,\fc) \Big\}. \]
Letting $f$ range over $\cX$ and identifying $\{\chi_\ga\}\times\cA(\fc)$ with
$\cA(\fc)$, we see that the set of weights of $\cF$ is in bijection with the set
$\cW $ of assignments
\[ w: \NN \times\bigcup_{\fc\in\cC}\cA(\fc)\to \{\ell\text{-cores} \} \]
such that 
\[ \sum_\fc\sum_{(\ga,\vhi)\in\NN \times\cA(\fc)}\ell^{\ga +|c|}|w(\ga, \vhi)|=n. \] 
Here note that the restrictions on $\cX$ in Proposition~\ref{prop:oliver}(a) do
not play a role. By \cite[Lemma~7.15]{KMS}, applied with $e=r=1$ and by the
argument of the proof of Theorem~4.2 at the end of Section 7 of \cite{KMS} it
follows that the number of $\cF$-weights equals the number of ordinary
irreducible characters of $G(1,1,n)\cong \fS_n$, the Weyl group of $\U(n)$.

Suppose next that $\bG = \Sp(n)$ or $\SO(2n+1)$. Let $\cX$ be as in
Proposition~\ref{prop:oliver}, let $f\in\cX$ and let $\bP$ be an $\ell$-stubborn
subgroup of $\bG$ corresponding to $f$. Since $\ell$ is odd, all irreducible
characters of $C_2$ are of $\ell$-defect~$0$, we obtain from
Proposition~\ref{prop:oliver}(b) and by \cite[Prop.~9.3]{KMS} that the weights
contributed by $\bP$ are in bijection with the set
\[\tilde \cW_f:=\Big\{w :\bigcup_{(\ga,\fc)}\{\chi_{\ga}\}\times C_2\times \cA(\fc) \to\{\ell\text{-cores}\} \quad
 \text{with}\sum_{\vhi\in \{\chi_\ga\} \times C_2 \times \cA(\fc) } | w(\vhi) | = f(\ga,\fc) \Big\}. \]
Letting $f$ range over $\cX$ and identifying $\{\chi_\ga\}\times C_2 \times
 \cA(\fc)$ with $C_2\times \cA(\fc)$, it follows that the set of $\cF$-weights
is in bijection with the set $\tilde \cW $ of assignments 
\[ w: \NN\times C_2\times\bigcup_{\fc\in\cC}\cA(\fc)\to \{\ell\text{-cores}\} \]
such that
\[ \sum_\fc\sum_{(\ga,x,\vhi)\in\NN\times C_2\times\cA(\fc)}\ell^{\ga +|c|} |w(\ga, x, \vhi)|=n. \] 
As in the previous case by \cite[Lemma~7.15]{KMS} now applied with $e=2$, $r=1$
and with $\tilde \cW$ in place of $\cW$ and by the argument of the proof of
Theorem~4.2 of \cite{KMS} we obtain that the number of $\cF$-weights equals the
number of ordinary irreducible characters of $G(2,1,n)\cong W(C_n)=W(B_n)$.

Now suppose that $\bG=\SO(2n)$ and let $f$, $\tilde \cW_f$ be as just defined
above. Let $C_2$ act on $\tilde \cW_f$ via
$y.w(\ga,x,\vhi):=w(\ga,y +x,\vhi)$ for $y\in C_2$, $w\in\tilde \cW_f$ and
$(\ga,x,\vhi)\in \NN\times C_2 \times\cA$. By Proposition~\ref{prop:oliver}(c)
and \cite[Prop.~9.3]{KMS}, applied with $N_{\OO(2n)}(\bP) /\bP$ in place of $G$
and $N_{\SO(2n)}(\bP) /\bP$ in place of $M$, the weights contributed by $\bP$
are indexed by $\ZZ/ 2\ZZ$-orbits of $\tilde \cW_f$ with each orbit contributing
as many weights as the order of the stabiliser of a point of the orbit. Again,
letting $f$ range over all elements of $\cX$, we obtain that the $\cF$-weights
are indexed by the $\ZZ/2\ZZ$-orbits of $\cW$ with each orbit contributing as
many weights as the order of the stabiliser of a point of the orbit, where $C_2$
acts on $\tilde \cW_f$ as above, that is via
$y.w(\ga,x,\vhi):=w(\ga,y +x,\vhi)$ for $y\in C_2$, $w\in\tilde \cW_f$ and
$(\ga,x,\vhi)\in \NN\times C_2 \times\cA$. 
As before, by \cite[Lemma~7.15]{KMS} now applied with $e=2 $, $r=2$ and with
$\tilde \cW$ in place of $\cW$ and by the argument of the proof of Theorem~4.2
of \cite{KMS} we obtain that the number of $\cF$-weights equals the number of
ordinary irreducible characters of $G(2,2,n) \cong W(D_n)$.
\end{proof} 

\begin{proof}[Proof of Theorem~\ref{thm:compweights2}]
We use again Lemma~\ref{lem:crtostubborn}.
By \cite[Lemma~1.5(ii)]{JMO92-I}, any $\ell$-stubborn subgroup of $\bG$
contains $Z(\bG)$ and a subgroup $\bP\leq \bG$ which contains $Z(\bG)$ is
$\ell$-stubborn if and only if $\bar\bP:=\bP/Z(\bG)$ is an $\ell$-stubborn
subgroup of $\bar\bG:=\bG/Z(\bG)$. For any $\bP\leq \bG$ containing $Z(\bG)$,
$N_\bG(\bP)/\bP\cong N_{\bar\bG}(\bar\bP)/\bar\bP$ and the Weyl groups of
$\bG$ and $\bar\bG$ are isomorphic. 
Thus, we may assume that $Z(\bG)=1 $ and hence that $\bG$ is a direct product of
simple compact Lie groups. By \cite[Lemma~1.5(i)]{JMO92-I}, the $\ell$-stubborn
subgroups of a direct product of compact connected Lie groups are the direct
products of $\ell$-stubborn subgroups of the factors and the decomposition into
direct factors respects conjugacy, normalisers, and Weyl groups hence we may
assume that $\bG$ is simple. 

Assume first that $\bG$ is of classical type, so $\ell\ge3$. Applying the above
arguments again, we may assume that $\bG$ is one of the groups $\U(n)$,
$\Sp(n)$, $\SO(2n+1)$ or $\SO(2n)$ and we are done by
Theorem~\ref{thm:compweights}. Now assume that $\bG$ is an exceptional group.
Suppose first that the Weyl group of $\bG$ is an $\ell'$-group. Then every
$\ell$-toral subgroup of $\bG$ is contained in a maximal torus of $\bG$, and by
Lemma~\ref{lem:radicalistubborn}(a)
the only $\ell$-stubborn subgroups of $\bG$ are the maximal tori and the result
is immediate. Thus the only cases left are $\bG=E_6$ with $\ell=5$, $\bG=E_7$
with $\ell=5,7$ and $\bG=E_8$ with $\ell=7$. These are handled by the next
proposition. Note that in all of these cases Sylow $\ell$-subgroups of $W$ are
cyclic.
\end{proof}

\begin{prop}   \label{prop:En}
 Suppose $\bG =E_n$ for $n\in\{6,7,8\}$ and let $\ell$ be a good prime
 dividing $|W(\bG)|$. Then $\bw(\cF_\ell(\bG)) = |\Irr(W)|$ where
 $\cF_\ell(\bG)$ denotes the $\ell$-fusion system of $\bG$.
\end{prop}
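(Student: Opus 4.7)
The plan is to apply Lemma~\ref{lem:crtostubborn}, which reduces the proof to an enumeration over $\bG$-classes of $\ell$-stubborn subgroups:
\[\bw(\cF_\ell(\bG)) = \sum_\bP z(N_\bG(\bP)/\bP). \]
The maximal torus $\bT$ is always $\ell$-stubborn with $N_\bG(\bT)/\bT\cong W$, so it contributes $z(W)$, the number of $\ell$-defect zero irreducible characters of $W$. I would then enumerate the remaining non-toral $\ell$-stubborn subgroups using the explicit classifications of Jackowski--McClure--Oliver~\cite{JMO95} and Viruel~\cite{Vi01}, compute $z(N_\bG(\bP)/\bP)$ for each, and match the total to $|\Irr(W)|$.

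In all four cases ($\bG=E_6$ at $\ell=5$, $\bG=E_7$ at $\ell=5,7$, and $\bG=E_8$ at $\ell=7$), a Sylow $\ell$-subgroup of $W$ is cyclic of order~$\ell$, which sharply constrains the $\ell$-local structure of $\bG$. The non-toral $\ell$-stubborn subgroups appearing in~\cite{JMO95, Vi01} are few and are elementary abelian $\ell$-groups $\bP$ of small rank, with $N_\bG(\bP)/\bP$ an extension involving a classical group over $\FF_\ell$ such as $\SL_m(\ell)$ or $\GL_m(\ell)$. For each such~$\bP$ I would read off the structure of $N_\bG(\bP)/\bP$ from~\cite{JMO95, Vi01} and count its defect-zero characters using standard character-theoretic facts (Steinberg characters of $\SL_m(\ell)$, and their extensions to $\GL_m(\ell)$, as exploited in Theorem~\ref{thm:compweights}).

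On the other side, since $|W|_\ell = \ell$, Brauer's theory of blocks with cyclic defect group yields
\[|\Irr(W)| = z(W) + \sum_B \Big(e_B + \frac{\ell-1}{e_B}\Big),\]
where $B$ runs over the $\ell$-blocks of $W$ of positive defect and $e_B$ denotes the inertial index. Thus it suffices to verify that the non-toral $\ell$-stubborn contributions sum to $\sum_B (e_B+(\ell-1)/e_B)$, with the natural expectation being a matching sending each positive-defect $\ell$-block $B$ of $W$ to a $\bG$-class of non-toral $\ell$-stubborn subgroups whose normalizer quotient contributes exactly $e_B+(\ell-1)/e_B$ defect-zero characters.

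The hard part will be the case $\bG=E_8$ at $\ell=7$: here the classification produces more non-toral $7$-stubborn subgroups, notably an elementary abelian group of rank~$3$ with normalizer quotient involving $\SL_3(7)$, and the match with the $7$-blocks of the relatively large Weyl group $W(E_8)$ requires careful bookkeeping. The three remaining cases should reduce to short explicit verifications using the known character tables of $W(E_6)$ and $W(E_7)$.
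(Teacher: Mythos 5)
The overall framework you set up is sound and matches the paper's first step---apply Lemma~\ref{lem:crtostubborn} and sum $z(N_\bG(\bP)/\bP)$ over $\ell$-stubborn classes. Beyond that, however, the concrete details of your enumeration are wrong in ways that would derail the computation.

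First, your description of the non-toral $\ell$-stubborn subgroups is incorrect. You omit the $\ell$-stubborn class $\bar S$, the $\ell$-toral group with identity component $\bT$ and component group $C_\ell$ (the closure of the Sylow discrete $\ell$-toral subgroup $S$). This class is non-toral but not elementary abelian, and its contribution $z(N_\bG(\bar S)/\bar S)$---where $N_\bG(\bar S)/\bar S$ is a finite $\ell'$-group, namely $N_W(C_\ell)/C_\ell$---accounts for a substantial share of the total (for instance $8$ out of $25$ for $(E_6,5)$ and $24$ out of $112$ for $(E_8,7)$). Second, the one remaining class of $\ell$-stubborn subgroups is not elementary abelian with automiser $\SL_m(\ell)$ or $\GL_m(\ell)$: in all four cases it is an extraspecial group $E\cong\ell_+^{1+2}$ with $\Out_\cF(E)\cong\SL_2(\ell).2$ (possibly times a small abelian direct factor coming from the centre). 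In particular your guess of a rank-$3$ elementary abelian with $\SL_3(7)$ normaliser for $(E_8,7)$ is wrong; there the group is $7_+^{1+2}$ with $\Out_\cF(E)\cong\SL_2(7).2\times C_2$ contributing only $4$ defect-zero characters, not the block-theoretic count you would get from an $\SL_3(7)$-type automiser.

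Third, the references you propose do not give you the data you need. Viruel~\cite{Vi01} treats only $F_4$ at $\ell=3$, and \cite{JMO95} is used in the paper only for $G_2$. The classification the paper actually invokes is Oliver--Ruiz~\cite{OR19} (fusion systems over $\ell$-groups with an abelian subgroup of index $\ell$), together with \cite[Thm~4.5]{S23} to pin down $\Out_\cF(P)$ for the remaining centric radicals once $\Out_\cF(T)=W$ is known; $\cite{OR19}$ applies precisely because in each of these four cases the Sylow $\ell$-subgroup of $W$ is cyclic of order $\ell$, so $T$ has index $\ell$ in $S$. Finally, the idea of comparing against $|\Irr(W)|$ via Brauer's cyclic-defect formula is a genuinely different angle from the paper (which simply adds up the three table entries against a known class number), but as written it is only a plan: you neither identify the positive-defect blocks of $W$ and their inertial indices, nor explain how the contribution of $\bar S$---which is not of the shape $e_B+(\ell-1)/e_B$---is supposed to fit your proposed one-to-one matching between blocks and stubborn classes.
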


\begin{proof}
The fusion system $\cF=\cF_\ell(\bG)$ is calculated in \cite[Thm~B(c)]{OR19}
and its structure is described in \cite[Tab.~5.1]{OR19}. Apart from $\{S\}$ and
$\{T\}$ there is one further class $\mathcal{B}=E^\cF$ of $\cF$-centric radical
subgroups with $E \cong \ell_+^{1+2}$. Now $\Out_\cF(T)=W(\bG)$, and
\cite[Thm~4.5]{S23} shows that this determines $\Out_\cF(P)$ for the remaining
$P\in\cF^{cr}$. We deduce that $\Out_\cF(P)$ and $z=z(\Out_\cF(P))$ are given by
the following table:

\begin{table}[ht]
$\begin{array}{c|cccccccc|c}
 (n,\ell) & S& z&& T& z&& E& z& |\Irr(W(\bG))|\\ \hline
 (6,5) & C_{4} \times C_2& \bf{8}&& W& \bf{15}&& \SL_2(5).2& \bf{2}& 25\\
 (7,5) & C_4 \times C_2 \times \fS_3& \bf{24}&& W& \bf{30}&& \SL_2(5).2 \times \fS_3& \bf{6}& 60\\
 (7,7) & C_6 \times C_2& \bf{12}&& W& \bf{46}&& \SL_2(7).2 & \bf{2}& 60\\
 (8,7) & C_6 \times (C_2)^2 & \bf{24}&& W& \bf{84}&& \SL_2(7).2 \times C_2& \bf{4}& 112\\
\end{array}$
\end{table}
The equality ensues.
\end{proof}

\begin{proof}[Proof of Theorem~\ref{thm:compweights3}]
This is immediate from Proposition~\ref{prop:compactalgebraic} and
Theorem~\ref{thm:compweights2}.
\end{proof}

\section{Bad primes} \label{sec:badprimes}

\begin{prop}   \label{prop:badfail}
 Suppose that $\bG=\Sp(n)$ with $n\ge2$ and $\ell=2$. Then $\bw(\cF) < |\Irr(W)|$.
\end{prop}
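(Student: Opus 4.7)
The plan is to adapt the strategy used in Theorem~\ref{thm:compweights} to the bad-prime case, substituting Oliver's classification of $2$-stubborn subgroups of $\Sp(n)$ (the $\ell=2$ counterpart of what is used in Proposition~\ref{prop:oliver}, and also contained in \cite{O94}). By Lemma~\ref{lem:crtostubborn},
\[ \bw(\cF) = \sum_{\bP} z(N_\bG(\bP)/\bP), \]
where $\bP$ runs over $\bG$-conjugacy classes of $2$-stubborn subgroups of $\bG=\Sp(n)$, so the task reduces to an explicit combinatorial count.

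The key step is to invoke Oliver's $\ell=2$ analogue of Proposition~\ref{prop:oliver}(b). Here the parameter set $\cX$ is subject to additional constraints (analogous to the exclusions $f(0,())\ne 2,4$ imposed in the $\U(n)$ case at $\ell=2$), and the wreath-product structure of $N_\bG(\bP)/\bP$ is modified: the $C_2$ factor present in the odd-prime formula is either absent or fails to contribute defect-zero characters in the same way, because at $\ell=2$ the one-dimensional characters of $C_2$ can no longer be used to multiply the defect-zero contribution. A second simplification is that at $\ell=2$ one has $|\cA(\fc)|=(\ell-1)^t=1$ for every $\fc \in \cC$, so each $\fc$ furnishes only a single Steinberg extension.

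With these modifications in place, the counting argument from the proof of Theorem~\ref{thm:compweights} (Proposition~9.3 together with Lemma~7.15 of \cite{KMS}, applied with $e=1, r=1$ rather than $e=2,r=1$, and the concluding step of the proof of Theorem~4.2 of \cite{KMS}) yields an explicit formula for $\bw(\cF)$. The resulting count matches the number of irreducible characters of $G(1,1,n)\cong \fS_n$, i.e.\ the partition number $p(n)$, whereas $|\Irr(W(C_n))|$ equals the number of bipartitions of $n$. Since $p(n)$ is strictly smaller than the number of bipartitions of $n$ for every $n\geq 2$ (already $p(2)=2 < 5$), the inequality $\bw(\cF) < |\Irr(W)|$ follows.

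The main obstacle is to unpack Oliver's classification at $\ell=2$ precisely enough to confirm that the $C_2$-factor of Proposition~\ref{prop:oliver}(b) either disappears or contributes no further defect-zero characters in the wreath product; once this structural assertion is in hand the combinatorial apparatus of Section~7 of \cite{KMS} delivers the strict inequality directly.
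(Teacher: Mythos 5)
Your overall strategy --- reduce to counting defect-zero characters of $N_\bG(\bP)/\bP$ over $2$-stubborn $\bP$ via Lemma~\ref{lem:crtostubborn} and then invoke Oliver's classification --- is the same as the paper's, but the key structural step is wrong, and it leads you to an incorrect value of $\bw(\cF)$.

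At $\ell=2$ Oliver's classification of $2$-stubborn subgroups of $\Sp(n)$ is not obtained from the odd-prime case by dropping a $C_2$-factor. There are \emph{two} distinct families of irreducible building blocks, $\Gamma^{\Sp}_{2^\ga}\wr E_{2^{c_1}}\wr\cdots$ and $\bar\Gamma^{\Sp}_{2^\ga}\wr E_{2^{c_1}}\wr\cdots$, so the conjugacy classes of $2$-stubborn subgroups are indexed by \emph{pairs} of functions $(f,f')$, not a single function. Moreover the automizer for a block of the first type is $\GO^-_{2\ga+2}(2)\times G_\fc$ (not $\Sp_{2\ga}(2)\times G_\fc$ and not a $C_2$-free truncation of it), while for the second type it is $\Sp_{2\ga'}(2)\times G_{\fc'}$. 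The inequality does not come from a $C_2$-factor evaporating; it comes from the asymmetry that $\GO^-_{2\ga+2}(2)$ has a $2$-defect-zero character only when $\ga=0$ (where it is $\fS_3$) and none for $\ga\ge 1$, whereas $\Sp_{2\ga'}(2)$ always has one. This kills one of the two ``$\ga$ directions'' in the count and produces a number strictly between $p(n)$ and the number of bipartitions.

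Concretely, your claim that the count reduces to $|\Irr(\fS_n)|=p(n)$ is already false for $n=2$: the paper's computation gives $\bw(\cF)=4$ for $\Sp(2)$, not $p(2)=2$. So the application of \cite[Lemma~7.15]{KMS} with $e=r=1$ is not the right combinatorial model here; you would need a two-variable version reflecting the pair $(f,f')$, with the first variable constrained to $\ga=0$. Without the two-family structure and the $\GO^-_{2\ga+2}(2)$ calculation, the argument does not close.
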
 

\begin{proof}
For $\ga\in\NN$ and $\fc\in\cF$, let $\bS_{\ga,\fc}$ (respectively
$\bar\bS_{\ga,\fc}$) denote the irreducible subgroup
$\Gamma_{2^\ga}^\Sp\wr E_{2^{c_1}}\wr\cdots\wr E_{2^{c_t}}$ (respectively
$\bar\Gamma_{2^\ga}^\Sp\wr E_{2^{c_1}}\wr\cdots \wr E_{2^{c_t}}$) of
$\Sp(2^{\ga+|\fc|})$ defined in \cite[Def.~2]{O94}. By \cite[Thms~6 and~8]{O94},
the $\bG$-classes of $2$-stubborn subgroups of $\bG$ are in
bijection with the set of ordered pairs of functions $(f, f')$ where 
\[ f: \NN\times\cC\to\NN\quad \text{and}\quad f': \NN\times\cC\to\NN \]
are such that
\[\sum_{(\ga,\fc)}2^{\ga +|\fc|} f({\ga,\fc})
   + \sum_{(\ga', \fc')}2^{\ga'+|\fc'|} f'({\ga',\fc'}) =n \]
and $f'((0,()))\ne 2,4$. If $\bP\leq\bG$ is a $2$-stubborn subgroup
corresponding to $(f,f')$ then
\[\bP \cong \prod_{(\ga,\fc)} \bS_{\ga,\fc}^{f(\ga,\fc)}
  \times \prod_{(\ga',\fc')} \bar \bS_{\ga',\fc'}^{f(\ga',\fc')} \]
and 
\[ N_\bG(\bP)/\bP\cong \prod_{(\ga,\fc)}(\GO_{2\ga+2}^-(2)\times G_\fc) \wr \fS_{f({\ga, \fc) }} \times \prod_{(\ga',\fc')}(\Sp_{2\ga'}(2) \times G_{\fc'})\wr\fS_{f'(\ga', \fc')} .\]

Now $\GO_{2}^-(2)\cong\fS_3$ has a unique irreducible character of $2$-defect
$0$ while $\GO_{2\ga+2}^-(2)$ has no irreducible characters of $2$-defect $0$
for $\ga\geq 1$ so the base subgroup $\GO_{2\ga+2}^-(2)\times G_\fc$ has
a unique irreducible character of $2$-defect $0$ if $\ga =0$ and none if
$\ga\geq 1$. The base subgroup $\Sp_{2\ga'}(2)\times G_{\fc'}$ has a
unique irreducible character of $2$-defect $0$ for all $\ga'\ge0$. Further,
the symmetric group
$\fS_m$ has one block of 2-defect $0$ if $m$ is a triangular number and none
otherwise. From this and \cite[Lemma 8.4]{BLO07}, it follows that the weight
contribution of $\bP$ is $1$ if $\ga=0$ and the values of $f$ and $f'$ are
triangular numbers (including 0), and is $0$ otherwise. So, the number of
$\cF$-weights equals the number of pairs $(f,f')$ of functions
\[ f:\cC\to\NN\quad \text{and}\quad f': \NN\times\cC\to\NN \]
such that 
 \[\sum_\fc 2^{|\fc|}f(\fc)
  +\sum_{(\ga',\fc')}2^{\ga'+|\fc'|}f'({\ga',\fc'}) =n \] 
and such that all values of $f$ and $f'$ are triangular numbers. On the other,
hand using the analysis for the odd $\ell$ case and noting that when $\ell=2$,
$\cA(\fc)$ is a singleton and that the set of $2$-cores is in bijection with the
set of triangular numbers, we have that the number of bipartitions of $n$, i.e.,
$|\Irr(W)|$, equals the number of pairs $(f,f')$ of functions
\[ f:\NN \times \cC\to\NN\quad \text{and}\quad f': \NN\times\cC\to\NN \]
such that 
\[\sum_{(\ga,\fc)} 2^{|\ga|+ |\fc|}f(\ga,\fc)
  +\sum_{(\ga',\fc')}2^{\ga'+|\fc'|}f'({\ga',\fc'}) =n\]
and such that all values of $f$ and $f'$ are triangular numbers.
Thus, the number of weights is strictly less than $|\Irr(W)|$ for all $n\geq 2$.
\end{proof}

\begin{exmp}
For $n=2$, we get 4 weights for $\Sp(2)$, namely for $((1);(0,()))$ with
$f=1,f'=0$, $(();(0,()))$ with $f=f'=1$, $(();(1,()))$ and $(();(0,(1)))$ with
$f=0,f'=1$, and so by the proof of Theorem~\ref{thm:compweights3},
$4=\bw(\cF) < |\Irr(W(\bG))|=5$, where $\bG$ is a simple algebraic group over
$\overline{\FF}_p$ of type $C_2$ and $\cF$ is its $2$-fusion system. Note,
however, that if $\cF$ is the $2$-fusion system of $\CSp_4(q)$ with
$\nu_2(q-1)> 2$ there are five $\cF$-centric radical subgroups each contributing
a single weight (so $\bw(\cF)=5$), but two of these subgroups are fused in
$\CSp_4(q^2)$.
\end{exmp}

\begin{exmp}   \label{exmp:finite}
 Let $\bG$ be a connected reductive group of symplectic or odd dimensional
 orthogonal type over a field of odd characteristic and $F:\bG\to\bG$ a
 Frobenius map. Then the number $\bw(\cF)$ of 2-weights for the principal
 2-block of $\bG^F$ equals the number of unipotent conjugacy classes of $\bG^F$
 (see \cite[Prop.~(2A)]{AC95} in general). Now the Springer correspondence
 defines an injective map from $\Irr(W)$ to the set of unipotent classes of
 $\bG^F$, so we conclude $\bw(\cF)\ge|\Irr(W)|$. In fact, since the Springer
 correspondence is not surjective in general we have $\bw(\cF)>|\Irr(W)|$ for
 large enough rank. E.g., for $\bG$ of type $B_4$ or type $C_6$ we have
 $\bw(\cF)=|\Irr(W)|+1$.
\end{exmp}

\begin{prop}   \label{prop:G2}
 Suppose $\bG =G_2$. Then $\bw(\cF) = |\Irr(W)| =6$ for all primes $\ell$.
\end{prop}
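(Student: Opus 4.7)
The plan begins by invoking Lemma~\ref{lem:crtostubborn} to rewrite
$\bw(\cF)=\sum_\bP z(N_\bG(\bP)/\bP)$, where $\bP$ ranges over a set of
$\bG$-conjugacy class representatives of $\ell$-stubborn subgroups of $\bG$.
Since the bad primes of $G_2$ are exactly $\ell=2$ and $\ell=3$, any prime
$\ell\ge5$ is good for $G_2$, and the assertion $\bw(\cF)=|\Irr(W)|=6$ is
immediate from Theorem~\ref{thm:compweights2}. Alternatively one can argue
directly: as $\ell\nmid|W|=12$, every $\ell$-stubborn subgroup $\bP$ must
equal a maximal torus $\bT$ of $\bG$, since any subgroup of $\bT$ of smaller
dimension would have centralizer containing a positive-dimensional torus
(violating finiteness of $N_\bG(\bP)/\bP$ in view of Lemma~\ref{lem:radicalistubborn}(a)), while the component group $\bP/\bT$ embeds
in the $\ell'$-group $W$ and so must be trivial. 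Then $N_\bG(\bT)/\bT\cong W$
contributes $z(W)=|\Irr(W)|=6$, all characters of $W$ having $\ell$-defect
zero because $\ell\nmid|W|$.

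For the bad primes $\ell=2$ and $\ell=3$ the strategy is to invoke the explicit
classification of $\ell$-stubborn subgroups of compact exceptional Lie groups
due to Viruel~\cite{Vi01}. For $G_2$ this produces a short finite list of
$\bG$-classes of $\ell$-stubborn subgroups, each accompanied by a concrete
description of the finite quotient $N_\bG(\bP)/\bP$; these quotients can be
read off from the local data of the maximal-rank subgroups of $G_2$ (the
subgroup of type $A_2$ governing the $3$-local structure, and the subgroup of
type $A_1\times A_1$ governing the $2$-local structure). For each class
representative I would then apply ordinary character theory to
$N_\bG(\bP)/\bP$ to count its irreducible characters of $\ell$-defect zero,
and sum the contributions to verify that the total equals $6$.

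The main obstacle is the piecewise nature of the bad-prime enumeration: the
uniform classical-groups arguments of Proposition~\ref{prop:oliver} and
Theorem~\ref{thm:compweights} do not apply, and unlike the cyclic-Sylow
setting of Proposition~\ref{prop:En}, the quotients $N_\bG(\bP)/\bP$ may have
nontrivial $\ell$-part, so a careful discard of characters of positive
$\ell$-defect is required. The delicate point is that the resulting sum
collapses precisely to $6$ for $G_2$ at both $\ell=2$ and $\ell=3$, in contrast
to the failures at bad primes recorded in Propositions~\ref{prop:badfail}
and~\ref{prop:F4}; this apparent coincidence is what the case analysis must
confirm.
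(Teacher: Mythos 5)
Your handling of the primes $\ell\ge5$ is fine and matches the paper: since $\ell\nmid|W|=12$, the maximal torus is the only $\ell$-stubborn subgroup, and $z(W)=|\Irr(W)|=6$ because every character of $W$ then has $\ell$-defect zero (the paper invokes its main theorem for this case, but your direct argument via Lemma~\ref{lem:radicalistubborn}(a) is equivalent and correct).

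For the bad primes $\ell=2,3$ there are two genuine problems. First, the citation is wrong: Viruel~\cite{Vi01} treats \emph{only} $F_4$ at $\ell=3$ (it is the source the paper uses for Proposition~\ref{prop:F4}); it does not classify $\ell$-stubborn subgroups of $G_2$ or of exceptional Lie groups in general. The classification you need is \cite[Lemma~3.2]{JMO95}. Relatedly, the claim that the automizers ``can be read off from the local data of the maximal-rank subgroups of $G_2$'' is too optimistic: for instance the automizer $\GL_3(2)$ of the non-toral elementary abelian $2^3$ at $\ell=2$ does not come from a rank-2 maximal-rank subgroup, so this heuristic would miss it.

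Second, and more seriously, you never actually carry out the enumeration that constitutes the content of the proposition at the bad primes. One needs the explicit lists: at $\ell=2$ there are six classes of $2$-stubborn subgroups with $N_\bG(\bP)/\bP$ isomorphic to $1$, $\fS_3$ (three times), $\fS_3\times\fS_3$, $\GL_3(2)$, each of which has exactly one defect-zero character, giving $6$; at $\ell=3$ there are two classes with $N_\bG(\bP)/\bP$ isomorphic to $C_2\times C_2$ (four defect-zero characters, since $3\nmid4$) and $\GL_2(3)$ (two defect-zero characters, the two of degree $3$), giving $4+2=6$. Without producing these lists and counts the proposal only restates the goal; the ``apparent coincidence'' you flag at the end is precisely the statement to be proved, so asserting that it ``must be confirmed'' is not a proof.
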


\begin{proof}
If $\ell>3$, then we are done by Theorem~\ref{thm:compweights}. The $2$-stubborn
and $3$-stubborn subgroups of $\bG$ are determined in \cite[Lemma~3.2]{JMO95}.
There are six classes of $2$-stubborn subgroups $\bP$, with $N(\bP)/\bP$
isomorphic to $1$, $\fS_3$ (for three of the classes), $\fS_3\times\fS_3$, and
$\GL_3(2)$, respectively. Each of these contributes one weight. There are two
classes of $3$-stubborn subgroups $\bP$ with $N(\bP)/\bP$ isomorphic to
$C_2\times C_2$ and $\GL_2(3)$, respectively. The first contributes $4$ weights
and the second contributes $2$ weights.
\end{proof}

\begin{prop}   \label{prop:F4}
 Suppose $\bG =F_4$. Then $\bw(\cF)= 22 < |\Irr(W)|=25$ for $\ell=3$.
\end{prop}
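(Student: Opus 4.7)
The strategy mirrors that of Proposition~\ref{prop:G2}: combine Lemma~\ref{lem:crtostubborn}, which expresses $\bw(\cF)$ as $\sum_\bP z(N_\bG(\bP)/\bP)$ over representatives of $\bG$-conjugacy classes of $3$-stubborn subgroups, with an explicit classification of those subgroups in $\bG = F_4$. For $\ell = 3$ such a classification is provided by \cite{JMO95}, which lists each $\bG$-class of $3$-stubborn subgroups $\bP$ together with its finite normaliser quotient $N_\bG(\bP)/\bP$. Since $3$ is bad for $F_4$ and $3^2$ exactly divides $|W(F_4)| = 1152$, this list is strictly longer than in the $\ell'$-case (where only the maximal torus class would appear) but considerably shorter than in the classical groups treated in Proposition~\ref{prop:oliver}.

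Once the classification is in hand, I would compute $z(N_\bG(\bP)/\bP)$ for each class directly from the character table of the corresponding finite quotient. The maximal torus class contributes $z(W(F_4))$, equal to the number of irreducible characters of $W(F_4)$ whose degree is divisible by $9 = |W(F_4)|_3$, which is read off the standard character table of $W(F_4)$. A Sylow $3$-subgroup contributes a single weight since its normaliser quotient is a $3'$-group. The remaining classes have quotients built from pieces such as $\SL_2(3)$, $\GL_2(3)$, and small symmetric extensions acting on an extraspecial group of order $27$; their zero-defect character counts are routine and, in parallel with the table in Proposition~\ref{prop:En}, most easily organised as a short table listing, for each class of $3$-stubborn $\bP$, the group $N_\bG(\bP)/\bP$ together with its defect-zero character count.

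Summing these contributions gives the claimed total $\bw(\cF) = 22$, while $|\Irr(W(F_4))| = 25$ is a well-known fact about the Weyl group of type $F_4$, yielding the strict inequality $\bw(\cF) < |\Irr(W)|$. The \emph{principal obstacle} is extracting and cross-checking the $3$-stubborn classification from \cite{JMO95}, in particular making sure that every $\bG$-class and every normaliser quotient is recorded correctly, without omission or double counting, and that the bad-prime subtleties (which cause extra stubborn classes beyond the toral one to appear) are handled precisely. After this step the character-theoretic bookkeeping is elementary.
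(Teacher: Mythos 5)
Your overall strategy — combine Lemma~\ref{lem:crtostubborn} with an explicit classification of $3$-stubborn subgroups of $F_4$ and sum the defect-zero counts of the automisers — is exactly the paper's approach, but two of your concrete claims are wrong and would derail the computation.

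First, the classification of $3$-stubborn subgroups of $F_4$ is not in \cite{JMO95} (which handles $G_2$, as used in Proposition~\ref{prop:G2}) but in Viruel's paper \cite[Prop.~3.6]{Vi01}. That result gives seven conjugacy classes with automisers $D_8$, $(C_2\times\Sp_2(3)).2$ (twice), $\Sp_2(3)\wr 2$, $\GL_2(3)$, $\SL_3(3)$ and $W(F_4)$.

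Second, and more importantly, your assertion that ``a Sylow $3$-subgroup contributes a single weight since its normaliser quotient is a $3'$-group'' is backwards. When $\Out_\cF(S)$ is a $3'$-group, \emph{every} irreducible character of $\Out_\cF(S)$ has $3$-defect zero, so $z(\Out_\cF(S)) = |\Irr(\Out_\cF(S))|$, not $1$. Here $\Out_\cF(S)\cong D_8$, which has five irreducible characters, so this class contributes $5$ weights. Carrying your erroneous count of $1$ through the sum would give $1+4+4+2+2+1+4=18$ rather than the correct $5+4+4+2+2+1+4=22$, so the discrepancy is not cosmetic.

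Finally, you assert the total is $22$ without exhibiting the defect-zero counts for the remaining automisers. Those counts (namely $z\bigl((C_2\times\Sp_2(3)).2\bigr)=4$, $z(\Sp_2(3)\wr 2)=2$, $z(\GL_2(3))=2$, $z(\SL_3(3))=1$, and $z(W(F_4))=4$, the last being the number of degree-$9$ characters of $W(F_4)$) are straightforward but should be stated, since they carry the content of the proposition. The value $|\Irr(W(F_4))|=25$ is indeed standard.
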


\begin{proof}
The $3$-stubborn subgroups of $\bG$ are determined in
\cite[Prop.~3.6]{Vi01}: there are seven conjugacy classes, with respective
automisers $D_8$, $(C_2\times\Sp_2(3)).2$ (twice), $\Sp_2(3)\wr2$, $\GL_2(3)$,
$\SL_3(3)$ and $W(F_4)$. The respective number of weights for those is 5, 4
(twice), 2, 2, 1, 4, adding up to~22.
\end{proof}


\end{document}